\begin{document}

\theoremstyle{plain}
\newtheorem{proposition}[subsubsection]{Proposition}
\newtheorem{lemma}[subsubsection]{Lemma}
\newtheorem{corollary}[subsubsection]{Corollary}
\newtheorem{thm}[subsubsection]{Theorem}
\newtheorem{introthm}{Theorem}
\newtheorem*{thm*}{Theorem}
\newtheorem{conjecture}[subsubsection]{Conjecture}
\newtheorem{question}[subsubsection]{Question}
\newtheorem{fails}[subsubsection]{Fails}

\theoremstyle{definition}
\newtheorem{definition}[subsubsection]{Definition}
\newtheorem{notation}[subsubsection]{Notation}
\newtheorem{condition}[subsubsection]{Condition}
\newtheorem{example}[subsubsection]{Example}
\newtheorem{claim}[subsubsection]{Claim}

\theoremstyle{remark}
\newtheorem{remark}[subsubsection]{Remark}

\numberwithin{equation}{subsection}

\newcommand{\eq}[2]{\begin{equation}\label{#1}#2 \end{equation}}
\newcommand{\ml}[2]{\begin{multline}\label{#1}#2 \end{multline}}
\newcommand{\mlnl}[1]{\begin{multline*}#1 \end{multline*}}
\newcommand{\ga}[2]{\begin{gather}\label{#1}#2 \end{gather}}
\newcommand{\mat}[1]{\left(\begin{smallmatrix}#1\end{smallmatrix}\right)}

\newcommand{\arir}{\ar@{^{(}->}}
\newcommand{\aril}{\ar@{_{(}->}}
\newcommand{\are}{\ar@{>>}}

\newcommand{\xr}[1] {\xrightarrow{#1}}
\newcommand{\xl}[1] {\xleftarrow{#1}}
\newcommand{\lra}{\longrightarrow}
\newcommand{\inj}{\hookrightarrow}

\newcommand{\mf}[1]{\mathfrak{#1}}
\newcommand{\mc}[1]{\mathcal{#1}}

\newcommand{\CH}{{\rm CH}}
\newcommand{\Gr}{{\rm Gr}}
\newcommand{\codim}{{\rm codim}}
\newcommand{\cd}{{\rm cd}}
\newcommand{\Spec} {{\rm Spec}}
\newcommand{\supp} {{\rm supp}}
\newcommand{\Hom} {{\rm Hom}}
\newcommand{\End} {{\rm End}}
\newcommand{\id}{{\rm id}}
\newcommand{\Aut}{{\rm Aut}}
\newcommand{\sHom}{{\rm \mathcal{H}om}}
\newcommand{\Tr}{{\rm Tr}}


\renewcommand{\P} {\mathbb{P}}
\newcommand{\Z} {\mathbb{Z}}
\newcommand{\Q} {\mathbb{Q}}
\newcommand{\C} {\mathbb{C}}
\newcommand{\F} {\mathbb{F}}

\newcommand{\OO}{\mathcal{O}}
\newcommand{\Di}{\mathfrak{Di}}
\newcommand{\divides}{\mid}
\newcommand{\notdivides}{\nmid} 
\newcommand{\coker}{{\rm coker}}
\newcommand{\Spf}{{\rm Spf}}

\title[Commutative formal groups arising from schemes]{Commutative formal groups arising from schemes}

\author{Andre Chatzistamatiou}
\address{Fachbereich Mathematik \\ Universit\"at Duisburg-Essen \\ 45117 Essen, Germany}
\email{a.chatzistamatiou@uni-due.de}

\thanks{This work has been supported by the SFB/TR 45 ``Periods, moduli spaces and arithmetic of algebraic varieties''}

\begin{abstract}
We prove the following criterion for the pro-representability of the deformation
cohomology of a commutative formal Lie group. Let $f$ be a flat morphism between noetherian schemes. Assume that the target of $f$ is flat over 
the integers. For a commutative formal Lie group $E$, we have the deformation 
cohomology of $f$ with coefficients in $E$ at our disposal. If the higher direct images
of the tangent space of $E$ are locally free and of finite rank then the deformation
cohomology is pro-representable by a commutative formal Lie group.    
\end{abstract}

\maketitle

\section*{Introduction}

Let $f:X\xr{} S$ be a flat and separated morphism. The formal completion 
$\widehat{{\rm Pic}}_{X/S}$ of the relative Picard sheaf can be described 
as the first deformation cohomology $\Phi^1(X/S)$ with coefficients in the 
formal completion $\hat{\mathbb{G}}_m$ of the multiplicative group. Artin 
and Mazur, in their paper \cite{Artin-Mazur}, study the higher degree analogues
$\Phi^r(X/S)$, motivated by the idea that these sheaves exhibit a strong tendency
to be pro-representable by formal Lie groups. Commutative formal Lie groups
provide interesting invariants over a field of positive characteristic but poor
invariants in characteristic zero where every commutative formal Lie group is isomorphic 
to a product of $\hat{\mathbb{G}}_m$.

The purpose of this paper is to give a convenient criterion for $\Phi^r(X/S)$ 
to be pro-representable when $S$ is flat over $\Spec(\Z)$. In order to state our theorem 
we need to introduce a bit of notation. We denote by
$\Phi^r(X/S,E)$ the deformation cohomology of a commutative formal Lie group $E$ over $X$ (see \ref{subsubsection-deformation-cohomology}) 
and $\Phi^r(X/S)=\Phi^r(X/S,\hat{\mathbb{G}}_m)$. The tangent space of $E$ is
denoted by $TE$, it is a locally free $\OO_X$-module; for example, $T\hat{\mathbb{G}}_m=\OO_X$. 

\begin{introthm}[Theorem \ref{thm-main}]\label{thm-intro-main}
  Let $f:X\xr{} S$ be a flat morphism between noetherian schemes. 
Suppose that $S$ is flat over $\Spec(\Z)$. Let $E$ be a commutative formal Lie group over $X$.
Suppose that $R^rf_*TE$ is locally free and of finite rank for all $r\geq r_0$. 
Then $\Phi^r(X/S,E)$ is pro-representable by a commutative formal Lie group over $S$ for all $r\geq r_0$.
\end{introthm}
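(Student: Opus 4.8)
The plan is to verify the standard recognition criterion for a functor on Artinian $\OO_S$-algebras to be pro-representable by a commutative formal Lie group: namely that $\Phi^r(X/S,E)$ is a functor to abelian groups which (i) commutes with fibre products of small extensions, (ii) has tangent space locally free of finite rank, and (iii) is formally smooth. By the structure theory of formal groups, an abelian-group-valued functor with these three properties is pro-represented by $\Spf$ of a power series algebra over $\OO_S$ carrying a formal group law. The entire analysis rests on the linearization sequence: for a square-zero extension $0\to I\to A'\to A\to 0$, evaluating the formal group $E$ on the ideal $I\cdot\OO_{X_{A'}}$, on which the group law is additive, yields a short exact sequence of sheaves of abelian groups on $X$,
$$0\longrightarrow TE\otimes I \longrightarrow \underline{E}_{A'}\longrightarrow \underline{E}_{A}\longrightarrow 0,$$
where $\underline{E}_{A}$ denotes the sheaf of $E$-valued points with coordinates in the nilpotents of $\OO_{X_A}$ relative to $X$. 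Applying $R^\bullet f_*$ produces the long exact sequence
$$\cdots \to R^rf_*(TE)\otimes I \to \Phi^r(A')\to \Phi^r(A)\xr{\delta} R^{r+1}f_*(TE)\otimes I\to\cdots,$$
in which the coefficient $I$ has been pulled out using the cohomology-and-base-change isomorphism, available because $R^rf_*TE$ is locally free for $r\ge r_0$.

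First I would settle (ii) and (i). Taking $A'=\OO_S[\varepsilon]/(\varepsilon^2)$ and $A=\OO_S$ in the linearization sequence identifies the tangent space of $\Phi^r$ with $R^rf_*TE$, which is locally free of finite rank by hypothesis; this gives (ii). For (i) I would argue that the formal-group functor $E$ turns a fibre product of rings into a fibre product of sheaves, since $E$-valued points are power series in the coordinate nilpotents and these pull back compatibly; the left-exactness of $R^rf_*$ together with base change — again supplied by the local freeness assumption, which is needed for the full range $r\ge r_0$ so that the comparison also covers the connecting degree $r+1$ — then upgrades the Schlessinger maps to isomorphisms. Combined with the finiteness in (ii), this already yields pro-representability of $\Phi^r$ by some complete local $\OO_S$-algebra $\mathcal{A}$ whose cotangent module is the locally free dual of $R^rf_*TE$.

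The main obstacle is (iii), formal smoothness, equivalently the vanishing of the relations in a presentation of $\mathcal{A}$: choosing coordinates $x_1,\dots,x_n$ lifting a basis of the cotangent module gives a surjection $\OO_S[[x_1,\dots,x_n]]\twoheadrightarrow\mathcal{A}$ with kernel $J$, and smoothness is exactly the assertion $J=0$. This is where flatness of $S$ over $\Spec(\Z)$ is indispensable. Over $\Q$ the formal logarithm furnishes an isomorphism between $E_\Q$ and the additive formal group on its tangent space, so $\Phi^r(X_\Q/S_\Q,E_\Q)$ is the coherent functor $I\mapsto R^rf_*(TE)\otimes I$; this is exact in the coefficient, hence the connecting map $\delta$ vanishes and $\Phi^r\otimes\Q$ is formally smooth. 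Therefore $J\otimes_\Z\Q=0$, i.e.\ $J$ is a torsion $\Z$-module. But $\OO_S[[x_1,\dots,x_n]]$ is flat over $\Z$ because $\OO_S$ is, hence torsion-free, so the torsion submodule $J$ must vanish. Thus $\mathcal{A}\cong\OO_S[[x_1,\dots,x_n]]$ and $\Phi^r$ is formally smooth, completing the identification with a commutative formal Lie group over $S$.

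I expect the delicate points to be, first, the verification that $R^rf_*$ genuinely commutes with the Artinian fibre products throughout the required range — for which the hypothesis on \emph{all} $r\ge r_0$, not merely $r=r_0$, is used to keep base change valid for the connecting degree $r+1$ — and second, the passage from generic smoothness of $\Phi^r\otimes\Q$ to the integral statement. The second step is clean only because $J$ sits inside a $\Z$-torsion-free ring; without the flatness of $S$ over $\Z$ the relations could be supported on torsion and $\Phi^r$ would fail to be a formal Lie group, exactly as happens for $\widehat{\mathrm{Pic}}$ in positive characteristic.
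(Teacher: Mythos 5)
Your ingredients are essentially the right ones --- the linearization sequence for square-zero extensions, the identification of the tangent space with $R^rf_*TE$, base change supplied by local freeness in all degrees $\geq r_0$, and the idea of combining the logarithm over $\Q$ with $\Z$-torsion-freeness of $\OO_S$ --- but there is a genuine gap at the step you label (i), and the way you deploy the torsion argument cannot repair it. The comparison map $\Phi^r(A'\times_A A'')\xr{} \Phi^r(A')\times_{\Phi^r(A)}\Phi^r(A'')$ is not governed by ``left-exactness of $R^rf_*$'' (which fails for $r\geq 1$), nor by base change in degree $r+1$. From the Mayer--Vietoris sequence $0\to \tilde{E}_{A'\times_AA''}\to \tilde{E}_{A'}\oplus\tilde{E}_{A''}\to\tilde{E}_A\to 0$ one sees that this map is always \emph{surjective}, while its kernel is the cokernel of $H^{r-1}(\tilde{E}_{A'})\oplus H^{r-1}(\tilde{E}_{A''})\to H^{r-1}(\tilde{E}_{A})$. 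So the obstruction to the fibre-product condition lives in cohomological degree $r-1$, which for $r=r_0$ is outside the range where $R^qf_*TE$ is assumed locally free; via the long exact sequence it amounts to the vanishing of the connecting map $\Phi^{r-1}(A)\xr{} R^rf_*TE\otimes I$, i.e.\ to the left-exactness of $\Phi^r$ in the coefficient. This is precisely the hard content of the theorem (for $r=1$, $E=\hat{\mathbb{G}}_m$ it is the classical cohomological-flatness issue for representability of ${\rm Pic}$), and your proposal never establishes it: the torsion argument in your step (iii) is applied to the relations ideal $J$ of a pro-representing ring $\mathcal{A}$ whose very existence presupposes (i), so the argument as structured is circular.

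The paper closes exactly this gap with Lemma \ref{lemma-referee}: over a $\Z$-torsion-free ring, a \emph{right}-exact functor on nilpotent $R$-algebras that is exact on square-zero algebras and on $\Q$-algebras is exact; the proof requires the d\'evissage $N'=N^2_{\Q}\cap N$ and free presentations, and right-exactness of $\Psi^q$ is then fed in by descending induction on $q$ starting from $\Psi^q=0$ for $q>\dim X$, upgrading to full exactness at each stage before descending. Your $\Q$-versus-torsion idea is the same germ as Step 1 of that lemma, but it has to be applied to the functor itself, degree by degree, \emph{before} any pro-representing object is produced, not to the relations of $\mathcal{A}$ afterwards. Two smaller points: over a general noetherian base the correct test category is nilpotent $R$-algebras rather than Artinian $\OO_S$-algebras, and the recognition criterion you need is Zink's \cite[2.32]{Zink} (exactness plus commutation with direct sums plus free finitely generated tangent module) rather than Schlessinger's; and $\OO_S[|x_1,\dots,x_n|]\otimes_{\Z}\Q$ is not $\OO_{S_{\Q}}[|x_1,\dots,x_n|]$, so even granting $\mathcal{A}$, the assertion that $J$ is $\Z$-torsion has to be extracted at each finite level $\mathcal{A}/\mathfrak{m}^n$.
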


In a first version of this paper, we proved Theorem~\ref{thm-intro-main} by using Cartier theory. It turns out 
that the Cartier module associated to $\Phi^r(X/S)$ is $H^r(X,\mathbb{W}\OO_X)$,
where $\mathbb{W}\OO_X$ is the sheaf of big Witt vectors, provided that $S$
is affine. The pro-representability of $\Phi^r(X/S)$ can be therefore reduced
to certain properties of $H^r(X,\mathbb{W}\OO_X)$ which are made precise by Cartier theory. 

In this version of the paper, we follow a quicker and more elegant approach via 
Lemma \ref{lemma-referee}, which was pointed out by the referee --
we are very grateful to the referee for sharing this proof with us.     

One motivation for the paper is the existence of a big de Rham-Witt 
complex $\mathbb{W}\Omega^{\bullet}_X$ for arbitrary schemes due to  
Hesselholt and Madsen \cite{HM}, \cite{H}. Here $\mathbb{W}\OO_X$ appears 
as a quotient of $\mathbb{W}\Omega^{\bullet}_X$, and in view of the Bloch-Illusie 
slope spectral sequence for smooth varieties over a perfect field one could 
expect that $H^*(X,\mathbb{W}\OO_X)$ captures an interesting part of $H^*(X,\mathbb{W}\Omega^{\bullet}_X)$.


In Section \ref{subsection-Gauss-Manin} we describe the relationship with the Gauss-Manin connection. If $S$ is a smooth scheme over $\Z$, and
$\mathfrak{X}$ is a commutative formal Lie group over $S$, then the Katz-Oda construction yields a connection 
$$
\nabla:H^i(\Omega^*_{\mathfrak{X}/S})\xr{} 
H^i(\Omega^*_{\mathfrak{X}/S})\otimes_{\OO_S}\Omega^1_{S/\Z}\qquad \text{for all $i$.}
$$
Since in the invariant 1-forms (see \ref{subsection-invariant-1-forms}) are automatically
closed, we may define $(H^1_{{\rm inv}}(\mathfrak{X}),\nabla)$ to be the smallest
subobject of $(H^1(\Omega^*_{\mathfrak{X}/S}),\nabla)$ that contains the invariant 
1-forms. 
As an application of a theorem of Stienstra \cite{Stienstra} we will prove the 
following statement. 

\begin{introthm}[Theorem \ref{thm-GM-simple}]
  Let $f:X\xr{} S$ be a smooth, projective morphism of relative dimension $r$.
Suppose that $S$ is smooth over $\Spec(\Z)$ and suppose that 
$R^jf_*\Omega^i_{X/S}$ is locally free for all $i,j$. 
Then $(H^1_{{\rm inv}}(\Phi^i(X/S)),\nabla)$ is a subquotient of the 
Gauss-Manin connection $(H^{2r-i}_{{\rm dR}}(X/S),\nabla)$ for all $i\geq 0$.
\end{introthm}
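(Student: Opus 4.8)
The plan is to realise both connections as pieces of de Rham cohomology and to transport one into the other by Stienstra's theorem. First I would apply Theorem~\ref{thm-main} with $E=\hat{\mathbb{G}}_m$: the sheaf $R^if_*\OO_X=R^if_*T\hat{\mathbb{G}}_m$ is locally free of finite rank (it is the Hodge sheaf $R^if_*\Omega^0_{X/S}$ in the local-freeness hypothesis), so $\Phi^i(X/S)$ is pro-representable by a commutative formal Lie group $\mathfrak{X}$ over $S$. Its tangent space is $T\mathfrak{X}=R^if_*\OO_X$, hence its module of invariant $1$-forms is the dual $(R^if_*\OO_X)^{\vee}$. Relative Serre duality for the smooth projective morphism $f$ of relative dimension $r$ then identifies this with $R^{r-i}f_*\Omega^r_{X/S}$, since $\Omega^r_{X/S}$ is the relative dualising sheaf.

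Next I would place these invariant $1$-forms inside $H^{2r-i}_{{\rm dR}}(X/S)$. Since $S$ is smooth over $\Z$ and all $R^jf_*\Omega^{\bullet}_{X/S}$ are locally free, the Hodge--de Rham spectral sequence degenerates (by the standard reduction to positive characteristic), so $H^{2r-i}_{{\rm dR}}(X/S)$ is locally free and carries a Hodge filtration whose top step is $F^rH^{2r-i}_{{\rm dR}}(X/S)=R^{r-i}f_*\Omega^r_{X/S}$, using $\Omega^p_{X/S}=0$ for $p>r$. By the previous step this top step is precisely the invariant $1$-forms of $\mathfrak{X}$, now realised as an $\OO_S$-submodule of $H^{2r-i}_{{\rm dR}}(X/S)$.

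The core of the proof is Stienstra's theorem \cite{Stienstra}, which I would use to produce a horizontal comparison between the Katz--Oda connection $(H^1(\Omega^*_{\mathfrak{X}/S}),\nabla)$ and the geometric Gauss--Manin connection $(H^{2r-i}_{{\rm dR}}(X/S),\nabla)$, extending the identification of invariant $1$-forms with $F^r$ from the previous step. Granting such a morphism of modules-with-connection, the conclusion is formal: $H^1_{{\rm inv}}(\mathfrak{X})$ is by definition the smallest $\nabla$-stable $\OO_S$-submodule of $H^1(\Omega^*_{\mathfrak{X}/S})$ containing the invariant $1$-forms, so the comparison carries it into the smallest $\nabla$-stable submodule of $H^{2r-i}_{{\rm dR}}(X/S)$ generated by $F^r$. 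Reading off the image and the kernel of the comparison on this $\nabla$-span then exhibits $H^1_{{\rm inv}}(\Phi^i(X/S))$ as a subquotient of $(H^{2r-i}_{{\rm dR}}(X/S),\nabla)$. To make the direction of the subquotient precise, and to handle the case where the comparison fails to be injective, I would invoke Poincar\'e duality between $H^i_{{\rm dR}}(X/S)$ and $H^{2r-i}_{{\rm dR}}(X/S)$, under which $T\mathfrak{X}=R^if_*\OO_X$ is matched with ${\rm gr}^0H^i_{{\rm dR}}(X/S)$ dually to the matching of the invariant $1$-forms with $F^r$.

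The main obstacle I anticipate is the horizontality asserted in the third step. One must verify that the Katz--Oda connection attached to the de Rham complex of the \emph{formal} $S$-group $\mathfrak{X}$ corresponds, under Stienstra's identification, to the geometric Gauss--Manin connection on $H^{2r-i}_{{\rm dR}}(X/S)$. This amounts to checking that Stienstra's comparison is natural in $S$ and defined integrally over the $\Z$-flat base, so that differentiation in the $S$-directions agrees on the two sides; it is here, rather than in the purely formal passage to smallest $\nabla$-stable submodules, that the substantive work lies.
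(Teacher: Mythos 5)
Your reduction steps are sound and match the paper's setup: pro-representability of $\Phi^i(X/S)$ via Theorem \ref{thm-main}, identification of the invariant $1$-forms with $(R^if_*\OO_X)^{\vee}\cong R^{r-i}f_*\Omega^r_{X/S}$ via Grothendieck duality, and the realisation of the subquotient as a quotient of the $D'_S$-span of $R^{r-i}f_*\Omega^r_{X/S}$ inside $H^{2r-i}_{{\rm dR}}(X/S)$. But there is a genuine gap exactly where you locate the ``main obstacle'': the existence of the horizontal comparison is the entire content of the theorem, and you only ``grant'' it. Moreover your proposed route to it --- checking that ``Stienstra's comparison is natural in $S$ and defined integrally'' --- mischaracterises Stienstra's theorem, which is not a comparison of connections at all but a congruence: if $\sum_jP_j\omega_j^{\vee}=0$ in $H^{2r-i}_{{\rm dR}}(X/S)$ then $\sum_jP_jB_{n,j}\equiv 0\bmod n$, where the $B_{n,j}$ are the coefficients of $\pi(F_n\xi)$ for $\xi\in H^i(X,\mathbb{W}\OO_X)$.

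What is actually needed (Theorem \ref{thm-GM-better}) is: given $P_1,\dots,P_d\in\Gamma(D'_S)$ with $\sum_jP_j\epsilon(\alpha_j^{\vee})=0$ in $H^{2r-i}_{{\rm dR}}(X/S)$, show that $\sum_jP_j\bar\omega_j=0$ in $H^1(\Omega^*_{\Phi^i(X/S)/S})$. After trivialising and writing $\log^*(x_j)=\sum_Ig_{j,I}x^I$ with $g_{j,I}\in R\otimes\Q$, this relation is equivalent to the integrality of the a priori only rational power series $f=\sum_I\sum_jP_j(g_{j,I})x^I$. That integrality is then tested on curves $\xi\in\Phi^i(X/S)(R[|x|])=H^i(X,\mathbb{W}\OO_X)$ with integral coordinates (Lemma \ref{lemma-check-coordinates}), where the $n$-th coefficient of $\xi^*\log^*(x_j)$ equals $\tfrac{1}{n}(F_n\log(\xi)_j)(0)$ and Stienstra's congruence supplies precisely the divisibility by $n$ needed to conclude $\xi^*(f)\in R[|x|]$. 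None of this mechanism --- the reformulation as an integrality statement, the reduction to one-variable curves, the use of the Cartier module $H^i(X,\mathbb{W}\OO_X)$ and the Frobenius operators --- appears in your outline, so the argument does not close. (A minor further point: the appeal to Poincar\'e duality ``to handle the case where the comparison fails to be injective'' is unnecessary; surjectivity onto $H^1_{{\rm inv}}$ is automatic from the definition of $H^1_{{\rm inv}}$ as the $D'_S$-module generated by the invariant forms, and injectivity is not required for a subquotient statement.)
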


\subsubsection*{Acknowledgments}
Lemma \ref{lemma-referee} and its use for proving the main theorem was shown to us by the referee for  
International Mathematics Research Notices. We are very grateful to the referee for providing this argument.

\section{Deformation cohomology}
\subsection{}\label{subsubsection-formal-completion}
Let $X$ be a scheme. Let $E$ be a sheaf of abelian groups on the big Zariski site of $X$.
The \emph{formal completion} $\hat{E}$ of $E$ along its zero section is defined by
$$
\hat{E}(Z):=\ker(E(Z)\xr{} E(Z_{\text{red}})).
$$ 
The formal completion $\hat{E}$ is a sheaf on the big Zariski site again (cf.~\cite[II.1]{Artin-Mazur}).

Examples are $\hat{\mathbb{G}}_a$ and $\hat{\mathbb{G}}_m$; in both cases the 
sheaves are pro-represented by the formal scheme ${\rm Spf}(\OO_X[|t|])$, the group law
being $m^*(t)=t\hat{\otimes} 1+1\hat{\otimes} t$ for $\hat{\mathbb{G}}_a$ and 
$m^*(t)=t\hat{\otimes} 1+1\hat{\otimes} t-t\hat{\otimes} t$ for $\hat{\mathbb{G}}_m$. 

\subsection{} \label{subsubsection-deformation-cohomology}
Let $f:X\xr{} S$ be a morphism of schemes. For every morphism $g:T\xr{} S$ we define
a sheaf $\tilde{E}_T$ on the small Zariski site of $X_T:=X\times_S T$ as follows: let $\iota:X\times_S T_{{\rm red}}\xr{} X\times_S T$ be the base change of 
$T_{{\rm red}}\xr{} T$, we set 
$$
\tilde{E}_T:=\ker(E\xr{} \iota_*E),
$$
i.e.~$\tilde{E}_T(U)=\ker(E(U)\xr{} E(U\times_T T_{{\rm red}}))$ for every open $U\subset X_T$. 
By denoting $f_T:X_T\xr{} T$ the base change of $f$, we obtain for every 
integer $q\geq 0$ the sheaf
$
R^qf_{T*}\tilde{E}_T
$
on $T$.

\begin{definition}\label{definition-AM-functor}
For a morphism $g:T\xr{} S$ we set 
$$
\Phi^q(X/S,E)(g:T\xr{} S):=\Gamma(T,R^qf_{T*}\tilde{E}_T).
$$
\end{definition}

The assignment $\Phi^q(X/S,E)$ defines a sheaf on the big Zariski site of $S$, which 
is called the \emph{deformation cohomology} of $X/S$ with coefficients in $E$ 
\cite[II.(1.4)]{Artin-Mazur}. Indeed,
for $T'\xr{h} T \xr{g} S$, the natural map 
$(id_X\times h)^{-1}\tilde{E}_T\xr{} \tilde{E}_{T'}$ induces 
$$
h^{-1}R^qf_{T*}\tilde{E}_T \xr{} R^qf_{T'*}(id_X\times h)^{-1}\tilde{E}_T\xr{} R^qf_{T'*}\tilde{E}_{T'},
$$
equipping $\Phi^q(X/S,E)$ with the structure of a presheaf. Since
$$
\Gamma(U,R^qf_{T*}\tilde{E}_T)=\Gamma(U,R^qf_{U*}\left(\tilde{E}_T\mid_{f_T^{-1}U}\right))=\Gamma(U,R^qf_{U*}\tilde{E}_U),
$$
for every open $U$ of $T$, it is a sheaf. 

 For a morphism $h:Y\xr{} X$ over $S$ we get an induced morphism of sheaves
$$
\Phi^q(h,E): \Phi^q(X/S,E) \xr{} \Phi^q(Y/S,E),
$$
which is functorial in $h$.

Obviously, the deformation cohomology for $E$ and its formal completion $\hat{E}$ agree:
$$
\Phi^q(X/S,E)=\Phi^q(X/S,\hat{E})\qquad \text{for all $q$.}
$$

\begin{remark}\label{remark-etale-vs-Zariski}
  Artin and Mazur use the \'etale topology in \cite[II]{Artin-Mazur} for the definition
of the deformation cohomology. For the coefficients we will be considering \'etale
and Zariski deformation cohomology agree (Lemma \ref{lemma-quotients-coherent}).
\end{remark}

\subsection{}
If $E$ is formally smooth then, by definition, we have an exact sequence 
$$
0\xr{} \tilde{E}_T\xr{} E_{\mid X_T} \xr{} \iota_*(E_{\mid X_{T_{{\rm red}}}})\xr{} 0.
$$ 
Therefore we obtain a long exact sequence 
\begin{align}\label{equation-explanation-deformation-theory}
  \dots &\xr{} \Phi^{q}(X/S,E)_{\mid T}  \xr{} R^qf_{T*}(E_{\mid X_T}) \xr{} R^qf_{T_{{\rm red}}*}(E_{\mid X_{T_{{\rm red}}}}) \xr{} \\
  &\xr{} \Phi^{q+1}(X/S,E)_{\mid T}, \nonumber
\end{align}
and $\Phi^{*}(X/S,E)$ controls the deformation of cohomology classes on $T_{{\rm red}}$ to classes on $T$. A classical case 
is $E=\mathbb{G}_m$ and $S=\Spec(R)$ with $R$ a discrete valuation ring with uniformizer $t$ and residue field $k$. Let 
$T_n=\Spec(R/t^n)$; we get an exact sequence:
$$
\Phi^1(X/S,\hat{\mathbb{G}}_m)(R/t^n) \xr{} {\rm Pic}(X_{T_n}) \xr{} {\rm Pic}(X\otimes_{R}k) \xr{} \Phi^2(X/S,\hat{\mathbb{G}}_m)(R/t^n).
$$ 
For $L\in {\rm Pic}(X\otimes_R k)$ we obtain an obstruction class in $\varprojlim_{n}\Phi^2(X/S,\hat{\mathbb{G}}_m)(R/t^n)$ which 
becomes trivial if $L$ lifts to $\varprojlim_n {\rm Pic}(X_{T_n})$ .

\subsection{} 
Let $f:X\xr{} S$ be a flat and proper morphism. Assume that $f$ is  cohomologically flat in dimension zero. Then ${\rm Pic}\;X/S$ is 
represented by an algebraic space whose formal completion along the zero section is $\Phi^1(X/S,\hat{\mathbb{G}}_m)$ (cf. \cite[II.4]{Artin-Mazur}).

\subsection{}\label{subsubsection-formal-Lie-group}
If $E$ is a commutative formal group over $X$ then we say that $E$ is a 
\emph{commutative formal Lie group} if every point $x\in X$ admits an open affine neighborhood 
$U=\Spec(A)$ and an isomorphism $E\times_X U\cong {\rm Spf}(A[|x_1,\dots,x_d|])$ as formal 
schemes over $\Spec(A)$ such that the zero $0\in (E\times_X U)(\Spec(A))$ 
identifies with the morphism  $\Spec(A)\xr{}{\rm Spf}(A[|x_1,\dots,x_d|]))$ given by $x_i\mapsto 0$ for all $i$. 

For a commutative formal Lie group $E$ we define the \emph{tangent space} $TE$ by
$$
TE(U):=\ker(E(\Spec(\OO_U[\epsilon]/\epsilon^2))\xr{} E(U))
$$
for every open $U\subset X$. The tangent space is a locally free $\OO_X$-module 
of finite rank. For example, $T\hat{\mathbb{G}}_m=\OO_X=T\hat{\mathbb{G}}_a$.

\subsection{}
Again, let $f:X\xr{} S$ be a morphism and $E$ a commutative formal Lie group over $X$. Let
$g:T\xr{} S$ be a morphism and suppose that $T=\Spec(R)$ for a noetherian ring $R$. Set $T_i=\Spec(R/{\rm nil}(R)^i)$ for all $i\geq 1$, and 
$$
\tilde{E}^i_T:=\ker(E\xr{} \iota_{i*}E),
$$
with $\iota_i:X_{T_i}\xr{} X_T$ the base change of $T_i\xr{} T$.
For an open $U\subset X_T$, we have $\tilde{E}^i_T(U)=\ker(E(U)\xr{} E(U\times_T T_i))$.
The sheaf $\tilde{E}_T$ admits the filtration 
$$
\tilde{E}_T=\tilde{E}^1_T\supset \tilde{E}^2_T\supset \dots \supset \tilde{E}^n_T=0,
$$
provided that ${\rm nil}(R)^n=0$.

\begin{lemma}\label{lemma-quotients-coherent}
  For all $i\geq 1$, the quotient $\tilde{E}^i_T/\tilde{E}^{i+1}_T$ is a coherent 
$\OO_{X_{T_{{\rm red}}}}$-module. More precisely,
$$
({\rm nil}(R)^i\cdot \OO_{X_T}/{\rm nil}(R)^{i+1}\cdot \OO_{X_T})\otimes_{\OO_{X_T}} (id\times g)^*TE 
\xr{\cong}  \tilde{E}^i_T/\tilde{E}^{i+1}_T.
$$
\begin{proof}
  We may assume that $S=T$. There is a natural injective morphism
$$
\tilde{E}^i_S/\tilde{E}^{i+1}_S\xr{} \ker( \iota_{i+1*}E\xr{}  \iota_{i*}E).
$$
Locally one easily shows that the morphism is also surjective. We have a morphism
\begin{align*}
({\rm nil}(R)^i\cdot \OO_{X}/{\rm nil}(R)^{i+1}\cdot \OO_{X})\otimes_{\OO_{X}} TE &\xr{} \ker( \iota_{i+1*}E\xr{}  \iota_{i*}E) \\
n\otimes s &\mapsto \phi_n^*(s),  
\end{align*}
where $\phi_n:X\times_S S_{i+1}\xr{} \Spec(\OO_X[\epsilon]/\epsilon^2)$ is 
defined over $X$ by $\phi_n^*(\epsilon)=n$. Again, one checks locally that 
the morphism is an isomorphism.
\end{proof}
\end{lemma}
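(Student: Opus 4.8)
The plan is to reduce immediately to the case $S=T=\Spec(R)$ and then to establish the isomorphism by an explicit local computation in the coordinate description of $E$. Nothing is lost by base-changing $f$ along $g$, so we may assume $g=\id$ and work directly with the filtration $\tilde{E}^i_S$. By definition $\tilde{E}^j_S=\ker(E\xr{}\iota_{j*}E)$, and since $E\xr{}\iota_{i*}E$ factors through $\iota_{i+1*}E$, the quotient $\tilde{E}^i_S/\tilde{E}^{i+1}_S$ maps naturally and injectively into $\ker(\iota_{i+1*}E\xr{}\iota_{i*}E)$. Surjectivity of this map, as well as the asserted isomorphism, are local statements on $X$, so it suffices to verify them on a trivializing affine open.

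On an affine $U=\Spec(A)$ over which $E\cong\Spf(A[|x_1,\dots,x_d|])$, I would compute $\ker(\iota_{i+1*}E\xr{}\iota_{i*}E)$ by hand. A $U$-point of $\iota_{i+1*}E$ is a $d$-tuple of nilpotent elements of $A/{\rm nil}(R)^{i+1}A$, and the kernel of reduction modulo ${\rm nil}(R)^i$ consists of those tuples whose coordinates lie in ${\rm nil}(R)^iA/{\rm nil}(R)^{i+1}A$. The key observation is that on this kernel the formal group law degenerates to addition: every monomial of total degree $\geq 2$ occurring in the group law is a product of at least two coordinates, hence lies in $({\rm nil}(R)^i)^2={\rm nil}(R)^{2i}\subseteq {\rm nil}(R)^{i+1}$, using $i\geq 1$. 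Thus the kernel is canonically the additive group $({\rm nil}(R)^iA/{\rm nil}(R)^{i+1}A)^{\oplus d}$.

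Finally I would match this with the tangent space. Over $U$ the module $TE$ is free on the coordinate derivations, so $TE(U)$ is identified with the tuples $(c_1\epsilon,\dots,c_d\epsilon)$ in $E(A[\epsilon]/\epsilon^2)$. The morphism $\phi_n$ with $\phi_n^*(\epsilon)=n$ is well defined because $n^2\in{\rm nil}(R)^{2i}$ vanishes in the relevant quotient, and it carries such a tangent vector to $(c_1n,\dots,c_dn)$, which is exactly the parametrization obtained above. This shows that $n\otimes s\mapsto\phi_n^*(s)$ induces the claimed isomorphism
$$
({\rm nil}(R)^i\cdot\OO_X/{\rm nil}(R)^{i+1}\cdot\OO_X)\otimes_{\OO_X}TE\xr{\cong}\tilde{E}^i_S/\tilde{E}^{i+1}_S,
$$
and coherence is then immediate, since the left-hand side is the tensor product of a coherent ideal-quotient with the locally free module $TE$.

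The step I expect to require the most care is the degeneration of the group law to addition on the graded piece: one must confirm that the local identifications are independent of the chosen trivialization and of the chosen power-series representation of the group law, so that they glue to a well-defined global map of sheaves. The vanishing of the quadratic and higher terms, which hinges precisely on the hypothesis $i\geq 1$, is what linearizes the graded object and allows it to be identified with a tensor product involving $TE$.
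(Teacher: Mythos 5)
Your proposal is correct and follows essentially the same route as the paper's proof: reduce to $S=T$, identify $\tilde{E}^i_S/\tilde{E}^{i+1}_S$ with $\ker(\iota_{i+1*}E\xr{}\iota_{i*}E)$, and then check locally on a trivializing affine that the map $n\otimes s\mapsto \phi_n^*(s)$ is an isomorphism. The only difference is that you spell out the local computation (the degeneration of the group law to addition on the graded piece via ${\rm nil}(R)^{2i}\subseteq{\rm nil}(R)^{i+1}$), which the paper leaves as ``one checks locally.''
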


\section{Representation by a formal group: reduction to nilpotent algebras}
\subsection{}
We will be mainly concerned with the following question when $S$ is flat over $\Spec(\Z)$.
\begin{question}\label{question-main} 
  Let $E$ be a  commutative formal Lie group over $X$. When is $\Phi^q(X/S,E)$
pro-representable by a  commutative formal Lie group over $S$? 
\end{question}

For simplicity we will work with noetherian schemes.

A pro-representing formal scheme $\mathfrak{X}$ with an isomorphism 
$\mathfrak{X}\cong \Phi^q(X/S,E)$ is unique up to unique isomorphism. Indeed,
if $J\subset \OO_{\mathfrak{X}}$ is the maximal ideal of definition then the 
scheme $X_n:=(\mathfrak{X},\OO_{\mathfrak{X}}/J^n)$ represents the restriction of 
$\Phi^q(X/S,E)$ to the category of schemes $T$ over $S$ such that ${\rm nil}(\OO_T)^n=0$. Thus $X_n$ is unique, which implies that $\mathfrak{X}=\varinjlim_n X_n$ is unique. In particular, Question \ref{question-main} is local in $S$.

\subsection{}
Let $R$ be a commutative ring. For an $R$-algebra $A$ we define a new $R$-algebra 
$N_R(A)$ by
$$
N_R(A):= R\oplus {\rm nil}(A)/\{(r,-r); r\in {\rm nil}(R)\}.
$$
The multiplication is given by $(r,n)\cdot (r',n')=(rr',rn'+r'n+nn')$. Obviously,
$$
{\rm nil}(A)\xr{\cong} {\rm nil}(N_R(A)), \quad n\mapsto (0,n),  
$$
is an isomorphism. Clearly, $N_R$ defines a functor 
$$
N_R: \text{($R$-algebras)} \xr{} \text{($R$-algebras)}.
$$
Moreover, there is a morphism of functors $N_R \xr{} id$ induced by the natural 
map
\begin{equation}\label{equation-NRA-to-A}
N_R(A)\xr{} A, \quad (r,n)\mapsto r+n.  
\end{equation}
Let $\mc{C}$ be the full subcategory consisting of $R$-algebras $A$ such that 
(\ref{equation-NRA-to-A}) is an isomorphism. Then $N_R$ takes values in $\mc{C}$
and is right adjoined to the forgetful functor $\mc{C}\xr{} \text{($R$-algebras)}$:
$$
\Hom_R(B,A)=\Hom_R(B,N_R(A)) \quad \text{for all $B\in \mc{C}$ and any $R$-algebra $A$.}
$$

\begin{proposition}\label{proposition-A-to-NRA}
Let $f:X\xr{} S=\Spec(R)$ be a flat morphism between 
noetherian schemes. Let $E$ be a commutative formal Lie group over $X$ and let 
$g:\Spec(A)\xr{} S$ be a morphism. We assume that $A$ is noetherian. For all 
$q\geq 0$ there is a functorial isomorphism 
\begin{equation}\label{equation-functorial-map-HqNRA-HqA}
H^q(X_{\Spec(N_R(A))},\tilde{E}_{\Spec(N_R(A))})  \xr{\cong} H^q(X_{\Spec(A)},\tilde{E}_{\Spec(A)}). 
  \end{equation}
\begin{proof}
  By using (\ref{equation-NRA-to-A}) we obtain a functorial morphism (\ref{equation-functorial-map-HqNRA-HqA}). Set $T:=\Spec(A)$; for every open $U\subset X$ we 
claim that 
\begin{equation}\label{equation-functorial-map-H0NRA-H0A}
  \Gamma(U_{\Spec(N_R(A))},\tilde{E}_{\Spec(N_R(A))}) \xr{} \Gamma(U_{T},\tilde{E}_{T})
\end{equation}
is an isomorphism. 

In fact, we may assume that $U=X$. Suppose first that $X$ is separated. Then 
it is sufficient to prove (\ref{equation-functorial-map-H0NRA-H0A}) for every
affine open $U=\Spec(B)$ in $X$. Moreover, we may assume that 
$E\times_XU\cong {\rm Spf}(B[|x_1,\dots,x_d|])$. Then (\ref{equation-functorial-map-H0NRA-H0A}) reads
$$
\prod_{i=1}^d \ker(B\otimes_R N_R(A)\xr{} B\otimes_R (N_R(A)/{\rm nil}(A)))
\xr{} \prod_{i=1}^d \ker(B\otimes_R A\xr{} B\otimes_R (A/{\rm nil}(A))).
$$
Since $B$ is flat over $R$ both sides equal 
$
(B\otimes_R {\rm nil}(A))^d.
$

If $X$ is not separated then we still have (\ref{equation-functorial-map-H0NRA-H0A}) 
for every separated open $U$ of $X$. Since an open of a separated scheme
is separated again, we can simply use a covering of $X$ by separated (e.g.~affine) open sets to conclude (\ref{equation-functorial-map-H0NRA-H0A}) for $X$.

Let us prove that (\ref{equation-functorial-map-HqNRA-HqA}) is an isomorphism. 
Again, suppose first that $X$ is separated. Since (\ref{equation-functorial-map-H0NRA-H0A}) 
holds it is sufficient to show that $H^q(X_{T},\tilde{E}_{T})$, for any noetherian 
affine scheme $T$, can be calculated by the cohomology of the \v{C}ech complex
associated to an affine open covering. This follows because $\tilde{E}_T$ has 
a finite filtration such that the graded pieces are coherent (Lemma~\ref{lemma-quotients-coherent}). 

If $X$ is not separated then we take a finite covering $\mathfrak{U}$ of $X$ by affine open sets.
By using the spectral sequence 
$$
E_1^{p,q}=\bigoplus_{i_0<\dots<i_p} H^q(\bigcap_{j=0}^p U_{i_{j},T},\tilde{E}_T) \Rightarrow H^{p+q}(X_T,\tilde{E}_T), 
$$
we reduce to the statement for the separated schemes $\bigcap_{j=0}^p U_{i_{j}}$.  
\end{proof}
\end{proposition}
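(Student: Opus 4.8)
The plan is to first prove the statement in degree $q=0$, at the level of global sections, and then to bootstrap to higher cohomology by a \v{C}ech computation. The comparison map itself is canonical: the natural transformation $N_R\xr{}\id$ of (\ref{equation-NRA-to-A}) produces a morphism $X_{\Spec(A)}\xr{}X_{\Spec(N_R(A))}$ over $X$, and the resulting pullback on cohomology supplies (\ref{equation-functorial-map-HqNRA-HqA}) functorially in $A$. The guiding principle is that $\tilde E_T$ sees only the nilpotent part of the base, while by construction ${\rm nil}(A)\xr{\cong}{\rm nil}(N_R(A))$; one therefore expects the two cohomology groups to agree.

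For $q=0$ I would reduce in stages. The assignment $U\mapsto\tilde E_T(U_T)$ is local on $X$, so after covering $X$ by affine opens, and using that an open subscheme of a separated scheme is again separated, it suffices to treat separated $X$, and there to treat an affine $U=\Spec(B)$ on which $E\times_X U\cong\Spf(B[|x_1,\dots,x_d|])$. On such a $U$ a section of $\tilde E_T$ over $U_T$ is a $d$-tuple of elements of $\ker(B\otimes_R A\xr{}B\otimes_R A/{\rm nil}(A))$; since $B$ is flat over $R$ this kernel is exactly $B\otimes_R{\rm nil}(A)$. Carrying out the identical computation for $N_R(A)$ and invoking ${\rm nil}(N_R(A))\cong{\rm nil}(A)$ identifies both groups with $(B\otimes_R{\rm nil}(A))^d$, the comparison map being the isomorphism induced by ${\rm nil}(N_R(A))\cong{\rm nil}(A)$. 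Gluing over an affine cover yields the case of separated $X$, and a second covering argument promotes it to arbitrary $X$.

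For $q>0$ the crucial point is that, although $\tilde E_T$ is merely a sheaf of abelian groups, Lemma~\ref{lemma-quotients-coherent} endows it with a finite filtration whose graded pieces are coherent $\OO_{X_{T_{\rm red}}}$-modules. On a separated $X$ this allows $H^q(X_T,\tilde E_T)$ to be computed by the \v{C}ech complex of a finite affine cover, because each coherent graded piece is acyclic on the affine members of the cover and on their (affine) intersections. As the degree-zero isomorphism already holds termwise over all these affine opens, the two \v{C}ech complexes are identified, and taking cohomology gives (\ref{equation-functorial-map-HqNRA-HqA}) for separated $X$. For non-separated $X$ I would fix a finite affine cover $\mathfrak U$ and feed the separated case into the spectral sequence $E_1^{p,q}=\bigoplus_{i_0<\dots<i_p}H^q(\bigcap_{j=0}^p U_{i_j,T},\tilde E_T)\Rightarrow H^{p+q}(X_T,\tilde E_T)$, whose $E_1$-page is assembled from the cohomology of the separated intersections $\bigcap_j U_{i_j}$.

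I expect the passage to $q>0$ to be the main obstacle. One must justify that \v{C}ech cohomology computes derived-functor cohomology for $\tilde E_T$, yet $\tilde E_T$ is not coherent, so the affine vanishing theorem does not apply to it directly. The way around this is precisely the coherent filtration of Lemma~\ref{lemma-quotients-coherent}, which reduces all the required acyclicity to Serre vanishing for coherent sheaves on affine schemes; the flatness hypothesis on $f$ enters one step earlier, where it is needed to identify the degree-zero groups with the honest tensor products $B\otimes_R{\rm nil}(A)$.
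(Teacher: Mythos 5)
Your proposal is correct and follows essentially the same route as the paper's own proof: the comparison map induced by \eqref{equation-NRA-to-A}, the reduction of the $q=0$ case to an affine chart where both sides become $(B\otimes_R{\rm nil}(A))^d$ via flatness of $B$ over $R$ and ${\rm nil}(N_R(A))\cong{\rm nil}(A)$, the use of the coherent filtration from Lemma~\ref{lemma-quotients-coherent} to justify the \v{C}ech computation in the separated case, and the Mayer--Vietoris spectral sequence for non-separated $X$. No substantive differences to report.
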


\subsection{}
Let $R$ be a commutative algebra. By a \emph{nilpotent} $R$-\emph{algebra} $\mc{N}$  
we mean an $R$-algebra $\mc{N}$  without $1$-element, such that $\mc{N}^r=0$ for an integer $r\geq 1$. There is an obvious $R$-algebra $R\oplus \mc{N}$ attached 
to every nilpotent $R$-algebra $\mc{N}$.

Let $f:X\xr{} \Spec(R)$ be a morphism and $E$ a commutative formal Lie group over $X$. 
For a nilpotent $R$-algebra $\mc{N}$  
we define a sheaf $\tilde{E}_{\mc{N}}$ on the small Zariski site of 
$X_{\Spec(R\oplus \mc{N})}$ by 
\begin{align*}
  \tilde{E}_{\mc{N}} 
                   &=\ker(\tilde{E}_{\Spec(R\oplus \mc{N})} \xr{}  \iota_*\tilde{E}_{\Spec(R)}),
\end{align*}
where $\iota:\Spec(R)\xr{} \Spec(R\oplus \mc{N})$ is induced by $R\oplus \mc{N}\xr{} R, (r,n)\mapsto r$. We note that $X\xr{} X_{\Spec(R\oplus \mc{N})}$ is topologically 
an isomorphism, and we can consider $\tilde{E}_{\mc{N}}$ as sheaf on $X$.

For an integer $q\geq 0$ we define 
\begin{align} \label{align-def-psi}
\Psi^q(X/S,E):\text{(nilpotent $R$-algebras)} &\xr{} \text{(abelian groups)} \\
  \Psi^q(X/S,E)(\mc{N}) &= H^q(X,\tilde{E}_{\mc{N}}). \nonumber
\end{align}


\begin{notation}\label{notation-representable-nilpotent-algebras}
We say that a functor 
$$
  \Psi: \text{(nilpotent $R$-algebras)} \xr{} \text{(abelian groups)} \\
$$ 
is pro-represented by the formal group $\mathfrak{X}$ over $R$ if 
$$
\Psi \cong \left[ \mc{N} \mapsto \ker(\mathfrak{X}(R\oplus \mc{N}) \xr{} \mathfrak{X}(R)) \right].
$$  
\end{notation}

\begin{proposition}\label{proposition-psi-phi}
  Let $q\geq 0$ be an integer. Let $f:X\xr{} S=\Spec(R)$ be a flat morphism between noetherian schemes. Let $E$ be a commutative formal Lie group over $X$.
If $\Psi^q(X/S,E)$ is pro-represented by a commutative formal Lie group $\mathfrak{X}$ (over $S$) then 
the restriction of $\Phi^q(X/S,E)$ to the category of noetherian schemes (over $S$) is pro-represented by $\mathfrak{X}$.
\begin{proof}
  For a noetherian $R$-algebra $A$ we have functorial isomorphisms 
$$
\Psi^q(X/S,E)({\rm nil}(A))\xr{\cong} H^q(X_{\Spec(N_R(A))},\tilde{E}) \xr{\cong} H^q(X_{\Spec(A)},\tilde{E}). 
$$ 
The first isomorphism is obvious and the second follows from Proposition \ref{proposition-A-to-NRA}. On the other hand, 
$\ker(\mathfrak{X}(R\oplus {\rm nil}(A)) \xr{} \mathfrak{X}(R))\cong \mathfrak{X}(A)$. Therefore the functor 
$$
\text{($R$-alg)}\xr{} \text{(abelian groups)}, \quad  A\mapsto H^q(X_{\Spec(A)},\tilde{E}_{\Spec(A)}),
$$
is pro-represented by $\mathfrak{X}$. 

We have a morphism of functors
\begin{equation}\label{equation-psi-to-phi}
[A\mapsto H^q(X_{\Spec(A)},\tilde{E})] \xr{} [A\mapsto \Phi^q(X/S,E)(\Spec(A))],  
\end{equation}
and the right hand side is the sheafication of the left hand side. Since 
$\mathfrak{X}$ defines a sheaf, the restriction of $\Phi^q(X/S,E)$ to the 
category of noetherian affine schemes is represented by $\mathfrak{X}$. Again, 
since $\Phi^q(X/S,E)$ and $\mathfrak{X}$ are both sheaves we obtain 
$\Phi^q(X/S,E) \cong \mathfrak{X}$.
\end{proof}
\end{proposition}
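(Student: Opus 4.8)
The plan is to use the functor $N_R$ and its adjunction to transport the pro-representability of $\Psi^q$, which is only given on nilpotent $R$-algebras, into a statement about the presheaf $A \mapsto H^q(X_{\Spec(A)}, \tilde{E}_{\Spec(A)})$ on all noetherian $R$-algebras, and then to sheafify. The genuine analytic content — comparing cohomology over $N_R(A)$ with cohomology over $A$ — is already packaged in Proposition \ref{proposition-A-to-NRA}, so the work that remains is essentially organizational.

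First I would fix a noetherian $R$-algebra $A$. Because $A$ is noetherian, $\mathrm{nil}(A)$ is a genuine nilpotent $R$-algebra, so $\Psi^q(X/S,E)(\mathrm{nil}(A))$ is defined. Unwinding the definition of $\tilde{E}_{\mathrm{nil}(A)}$ together with the isomorphism $\mathrm{nil}(A) \xrightarrow{\cong} \mathrm{nil}(N_R(A))$ shows that $\Psi^q(X/S,E)(\mathrm{nil}(A))$ agrees with $H^q(X_{\Spec(N_R(A))}, \tilde{E}_{\Spec(N_R(A))})$, since passing to $N_R(A)$ exactly refolds the $R$-nilpotent directions into $\mathrm{nil}(A)$, so that the ``pure $\mathrm{nil}(A)$ direction relative to $R$'' coincides with the full deformation direction relative to $(N_R(A))_{\mathrm{red}} = R_{\mathrm{red}}$; this is a direct comparison of the defining kernels. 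I would then invoke Proposition \ref{proposition-A-to-NRA}, whose conclusion is precisely the isomorphism $H^q(X_{\Spec(N_R(A))}, \tilde{E}) \xrightarrow{\cong} H^q(X_{\Spec(A)}, \tilde{E})$, to obtain a functorial identification of $\Psi^q(\mathrm{nil}(A))$ with $H^q(X_{\Spec(A)}, \tilde{E}_{\Spec(A)})$.

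Next I would feed in the pro-representability hypothesis. By Notation \ref{notation-representable-nilpotent-algebras} we have $\Psi^q(\mathcal{N}) \cong \ker(\mathfrak{X}(R \oplus \mathcal{N}) \to \mathfrak{X}(R))$, and specializing to $\mathcal{N} = \mathrm{nil}(A)$ I would use that $\mathfrak{X}$ is a formal group — so its $A$-points see only nilpotent coordinates — to identify $\ker(\mathfrak{X}(R \oplus \mathrm{nil}(A)) \to \mathfrak{X}(R))$ with $\mathfrak{X}(A)$. Chaining the isomorphisms of the previous step then shows that the presheaf $A \mapsto H^q(X_{\Spec(A)}, \tilde{E}_{\Spec(A)})$ on noetherian $R$-algebras is pro-represented by $\mathfrak{X}$. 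The care needed here is to verify that every identification in the chain is natural in $A$; I expect this functoriality bookkeeping, rather than any single estimate, to be the main obstacle, the hard analytic point having already been isolated in Proposition \ref{proposition-A-to-NRA}.

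Finally I would pass from the presheaf to $\Phi^q$. By construction $\Phi^q(X/S,E)$ is the Zariski sheafification of the presheaf $A \mapsto H^q(X_{\Spec(A)}, \tilde{E})$ along the comparison map (\ref{equation-psi-to-phi}). Since $\mathfrak{X}$ is itself a sheaf and represents the presheaf on noetherian affine schemes, it also represents the sheafification there; and because $\Phi^q$ and $\mathfrak{X}$ are both sheaves on the big Zariski site and agree on affine objects, they agree on all noetherian schemes over $S$. This yields the desired pro-representability of the restriction of $\Phi^q(X/S,E)$ by $\mathfrak{X}$.
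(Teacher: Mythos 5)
Your proposal follows the paper's own proof step for step: the identification of $\Psi^q(\mathrm{nil}(A))$ with cohomology over $N_R(A)$, the application of Proposition \ref{proposition-A-to-NRA}, the identification $\ker(\mathfrak{X}(R\oplus\mathrm{nil}(A))\to\mathfrak{X}(R))\cong\mathfrak{X}(A)$, and the final sheafification argument via \eqref{equation-psi-to-phi}. It is correct and takes essentially the same route as the paper.
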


As a criterion for pro-representability by a formal Lie group we will use the
following proposition.   

\begin{proposition}\cite[2.32]{Zink}\label{proposition-exact-representable}
  Let $H:\text{(nilpotent $R$-algebras)} \xr{} \text{(ab. groups)}$ be an 
exact functor which commutes with arbitrary direct sums. Suppose that $H(xR[x]/x^2R[x])$ is a free and finitely generated $R$-module. Then $H$ is represented 
by a commutative formal Lie group $\mathfrak{X}$ in the sense of Notation \ref{notation-representable-nilpotent-algebras}.
\end{proposition}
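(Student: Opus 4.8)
The plan is to reconstruct $H$ from its tangent module, using exactness to encode formal smoothness, to run an induction on the nilpotency length, and finally to repackage the result as a formal group law via Yoneda. \emph{Step 1 (linearization).} Set $L:=H(xR[x]/x^2R[x])$, the value on the free rank-one square-zero algebra; by hypothesis $L$ is free of finite rank, say $d$. The category of square-zero nilpotent $R$-algebras is equivalent to the category of $R$-modules (a square-zero algebra is an $R$-module $M$ with zero multiplication, and algebra maps are exactly $R$-linear maps), and under this equivalence direct sums and short exact sequences match. Hence the restriction of $H$ is an additive functor $R\text{-Mod}\to(\text{ab.\ groups})$ that is right exact and commutes with arbitrary direct sums, so by the Eilenberg--Watts theorem it is canonically isomorphic to $M\mapsto M\otimes_R H(R)=M\otimes_R L$. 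Fixing a basis $e_1,\dots,e_d$ of $L$ provides the coordinates $t_1,\dots,t_d$ of the prospective formal group and identifies $H(M)\cong M^{\oplus d}$ for square-zero $M$.

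\emph{Step 2 (smoothness and the filtration).} For a nilpotent algebra $\mc N$ with $\mc N^{r}=0$ the powers $\mc N^i$ are ideals and the quotients $\mc N^i/\mc N^{i+1}$ are square-zero. Applying the exact functor $H$ to $0\to\mc N^i/\mc N^{i+1}\to\mc N/\mc N^{i+1}\to\mc N/\mc N^{i}\to 0$ gives short exact sequences
\[
0\to (\mc N^i/\mc N^{i+1})\otimes_R L\to H(\mc N/\mc N^{i+1})\to H(\mc N/\mc N^{i})\to 0,
\]
where the kernel is computed by Step 1. In particular every transition map is surjective, so $H$ is \emph{formally smooth}, and $H(\mc N)$ is a successive extension with graded pieces $(\mc N^i/\mc N^{i+1})\otimes_R L$.

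\emph{Step 3 (the representing formal group).} Let $A_n=R[t_1,\dots,t_d]/(t_1,\dots,t_d)^{n+1}$ with augmentation ideal $\mf m_n$, a finite nilpotent $R$-algebra, and let $\hat{\mathbb{A}}^d$ be the functor $\mc N\mapsto \mc N^{\oplus d}$; thus $\hat{\mathbb{A}}^d\cong\varinjlim_n \Hom_{\text{nilp}}(\mf m_n,-)$. A compatible system $u=(u_n)\in\varprojlim_n H(\mf m_n)$ (inverse limit along the maps $\mf m_{n+1}\twoheadrightarrow\mf m_n$) therefore determines, by Yoneda, a natural transformation $\mathrm{ev}_u:\hat{\mathbb{A}}^d\to H$. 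I would take $u_1\in H(\mf m_1)=H(R^{\oplus d})\cong L^{\oplus d}$ to be $\sum_j\delta_j\otimes e_j$, the element that Step 1 identifies with the identity, and then lift it step by step to the $u_n$ using the surjectivity of Step 2. By construction $\mathrm{ev}_u$ restricts on each square-zero graded piece to the isomorphism of Step 1, and it is compatible with the filtrations by $\mc N^i$ on both sides, whose associated short exact sequences match (that of Step 2 on the $H$-side, and $0\to(\mc N^i/\mc N^{i+1})^{\oplus d}\to(\mc N/\mc N^{i+1})^{\oplus d}\to(\mc N/\mc N^{i})^{\oplus d}\to0$ on the $\hat{\mathbb{A}}^d$-side). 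An induction on $r$ together with the five lemma then shows $\mathrm{ev}_u$ is an isomorphism of pointed-set functors. Transporting the group law of $H$ across $\mathrm{ev}_u$ equips $\hat{\mathbb{A}}^d$ with a natural, commutative group structure, which by Yoneda for formal schemes is exactly a commutative formal group law $F(t,t')$ on $R[|t_1,\dots,t_d|]$, the zero of $H$ furnishing the identity section. Hence $H$ is pro-represented by the commutative formal Lie group $\mf X=\Spf R[|t_1,\dots,t_d|]$ of dimension $d$.

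\emph{Main obstacle.} The delicate part is Step 3: arranging the $u_n$ coherently and proving $\mathrm{ev}_u$ is an \emph{isomorphism} rather than merely surjective. This is where the full force of exactness enters — surjectivity of the transition maps supplies the successive lifts, while left exactness pins the kernels down to $(\mc N^i/\mc N^{i+1})\otimes_R L$, forcing the graded comparison with $\hat{\mathbb{A}}^d$ to be the Step 1 isomorphism; commuting with arbitrary direct sums is what makes Step 1 valid for the (possibly infinite-rank) module kernels that occur. Once the pointed-set isomorphism is in place, passing from the abstract group structure to an honest formal group law is formal via Yoneda, with commutativity of $F$ inherited from commutativity of $H$. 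This is Zink's criterion \cite[2.32]{Zink}.
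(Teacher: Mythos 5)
The paper gives no proof of this proposition at all --- it is quoted verbatim from Zink \cite[2.32]{Zink} --- so there is no internal argument to compare against; what you have written is essentially the standard proof of Zink's criterion (linearize on square-zero algebras, get smoothness and a filtration from exactness, build the representing pro-object from a compatible system $u=(u_n)$). Steps 1 and 2 are correct, including the use of Eilenberg--Watts (the hypothesis on direct sums applies, since square-zero algebras have uniformly bounded nilpotency).

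There is, however, a genuine gap at the pivot of Step 3, namely ``induction together with the five lemma.'' Your $\mathrm{ev}_u:\hat{\mathbb{A}}^d\to H$ is only a natural transformation of \emph{set}-valued functors --- it cannot respect the additive group structure of $\hat{\mathbb{A}}^d$ unless the resulting formal group is $\hat{\mathbb{G}}_a^d$ --- so you are invoking the five lemma for a ladder of short exact sequences of abelian groups whose middle vertical arrow is merely a map of sets. In that generality the conclusion is false: on $0\to\Z/2\to\Z/4\to\Z/2\to 0$ take the identity on the outer terms and the middle set-map fixing $\{0,2\}$ and sending both $1$ and $3$ to $1$; all squares commute, yet the middle map is not injective. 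The missing ingredient is an equivariance lemma. Put $\mc M=\mc N/\mc N^{i+1}$ and $I=\mc N^i/\mc N^{i+1}$; then $I\cdot\mc M=0$, so the addition $\sigma:\mc M\oplus I\to \mc M$, $(m,w)\mapsto m+w$, is a morphism of nilpotent $R$-algebras (here $\mc M\oplus I$ is the direct sum of nilpotent algebras, which is identified with $\mc M\times_{\mc M/I}\mc M$). Since $\hat{\mathbb{A}}^d$ and $H$ both commute with finite direct sums and $H(\sigma)$ is a group homomorphism, naturality of $\mathrm{ev}_u$ with respect to $\sigma$ and the two inclusions gives $\mathrm{ev}_u(x+w)=\mathrm{ev}_u(x)+\iota_*\mathrm{ev}_u(w)$ for $x\in\mc M^d$, $w\in I^d$, where $\iota:I\inj\mc M$. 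Only with this identity does $\mathrm{ev}_u$ carry each fibre of $\mc M^d\to(\mc M/I)^d$ bijectively onto the corresponding coset of $\iota_*H(I)$ in $H(\mc M)$, and the induction on nilpotency length closes. Once that lemma is inserted, the remainder of Step 3 --- the lifting of $u_1$, and the transport of the group law via Yoneda to a formal group law $F\in (R[|t,t'|]^{+})^d$ --- is sound.
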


\begin{remark}\label{remark-exact-commutes-with-direct-sums}
  A short exact sequence in the category of nilpotent $R$-algebras is by definition  a sequence $0\xr{} N_1\xr{} N_2\xr{} N_3\xr{} 0$ of morphisms of 
nilpotent $R$-algebras that is exact as a sequence of $R$-modules. A functor $H$
as above is called exact if it preserves exact sequences. Left and right 
exact functors are defined in the same way.

By definition, a functor $H$ commutes with arbitrary direct sums if the natural map
$$
\bigoplus_{i\in I} H(N_I) \xr{} H(\bigoplus_{i\in I} N_i)
$$ 
is an isomorphism for all sets $\{N_i\}_{i\in I}$ 
of nilpotent $R$-algebras with the property that there exists $k\geq 1$ with $N_i^k=0$ for all $i\in I$.
\end{remark}

\section{Representation by formal groups}

\subsection{Formulation of the main theorem}
Let $f:X\xr{} S$ be a morphism of schemes. Let $E$ be a commutative formal Lie group over $X$ (see \ref{subsubsection-formal-Lie-group}), e.g. $E=\hat{\mathbb{G}}_m$, the 
formal completion of $\mathbb{G}_m$ (see \ref{subsubsection-formal-completion}).
The tangent space of $E$ is denoted by $TE$.
Recall Definition \ref{definition-AM-functor} for the Artin-Mazur functor $\Phi^q(X/S,E)$.

\begin{thm}\label{thm-main}
  Let $f:X\xr{} S$ be a flat morphism between noetherian schemes. 
Suppose that $S$ is flat over $\Spec(\Z)$. Let $E$ be a commutative formal Lie group over $X$.
Suppose that $R^qf_*TE$ is locally free and of finite rank for all $q\geq q_0$. 
Then the restriction of $\Phi^q(X/S,E)$ to the category of noetherian schemes (over $S$) is pro-representable by a commutative formal Lie group  with tangent space $R^qf_*TE$ for all $q\geq q_0$.
\end{thm}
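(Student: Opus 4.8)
The plan is to reduce the statement to Zink's representability criterion (Proposition \ref{proposition-exact-representable}) applied to the functor $\Psi^q(X/S,E)$ on nilpotent $R$-algebras. Since the pro-representing object is unique and pro-representability is local on $S$, I would first replace $S$ by an affine open $\Spec(R)$ over which $R^qf_*TE$ is free of finite rank, and write $M^q:=\Gamma(S,R^qf_*TE)=H^q(X,TE)$, a finite free $R$-module for $q\geq q_0$. By Proposition \ref{proposition-psi-phi} it then suffices to show that $\Psi^q(X/S,E)$ is pro-represented by a commutative formal Lie group with tangent space $M^q$ for every $q\geq q_0$; to invoke Proposition \ref{proposition-exact-representable} I must verify that $\Psi^q$ is exact, commutes with arbitrary direct sums, and that $\Psi^q(xR[x]/x^2R[x])$ is free of finite rank over $R$.

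The geometric input is a local linearization of $\tilde{E}_{\mc N}$. Choosing an affine open $U=\Spec(B)\subset X$ with $E\times_XU\cong\Spf(B[|x_1,\dots,x_d|])$, a section of $\tilde{E}_{\mc N}$ over $U$ is a $d$-tuple of coordinates in the nilpotent ideal $B\otimes_R\mc N$, so that $\tilde{E}_{\mc N}(U)\cong(B\otimes_R\mc N)^d$ as a set. Because $f$ is flat, $B$ is flat over $R$, and functoriality of the formal group law in the coefficient ring shows that for a short exact sequence $0\to N_1\to N_2\to N_3\to 0$ of nilpotent $R$-algebras the induced sequence $0\to\tilde{E}_{N_1}\to\tilde{E}_{N_2}\to\tilde{E}_{N_3}\to 0$ is exact as sheaves of abelian groups on $X$. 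Hence $\Psi^\bullet=H^\bullet(X,\tilde{E}_{(-)})$ is a cohomological $\delta$-functor, and commutation with direct sums (in the sense of Remark \ref{remark-exact-commutes-with-direct-sums}) follows since $\mc N\mapsto\tilde{E}_{\mc N}$ commutes with such direct sums and cohomology on the noetherian space $X$ commutes with filtered colimits.

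For the tangent module and the base case I would use the square-zero algebras, where the linearization is an isomorphism of group sheaves $\tilde{E}_{\mc N}\cong\mc N\otimes_RTE$ (Lemma \ref{lemma-quotients-coherent} with a single graded piece). Since $TE$ is locally free over $\OO_X$ and $f$ is flat, $TE$ is flat over $R$, so $\mc N\otimes_RTE$ already computes $\mc N\otimes_R^{\mathbb L}TE$; the derived projection formula gives $R\Gamma(X,\mc N\otimes_RTE)\cong\mc N\otimes_R^{\mathbb L}R\Gamma(X,TE)$, and the universal-coefficient spectral sequence then yields $\Psi^q(\mc N)\cong\mc N\otimes_RM^q$ for all $q\geq q_0$, because the higher Tor-terms against the projective modules $M^{q'}$ with $q'\geq q_0$ vanish. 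In particular $\Psi^q(xR[x]/x^2R[x])\cong M^q$ is free of finite rank.

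The main obstacle is exactness of $\Psi^q$ beyond the square-zero case: for a general nilpotent algebra $\tilde{E}_{\mc N}$ differs from its linearization $\mc N\otimes_RTE$ through the higher terms of the formal group law, and I must show that the connecting maps of the $\delta$-functor vanish in the range $q\geq q_0$, so that $\Psi^q(\mc N)\cong\mc N\otimes_RM^q$ persists for all nilpotent $\mc N$. I would attack this by d\'evissage along $0\to\mc N^{n-1}\to\mc N\to\mc N/\mc N^{n-1}\to 0$, reducing to the vanishing of a single connecting homomorphism landing in the controlled group $\mc N^{n-1}\otimes_RM^q$; the delicate point is the boundary degree $q_0$, where the source cohomology is not a priori of the expected shape. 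This is exactly where the hypothesis that $S$ is flat over $\Spec(\Z)$ must enter: over $R\otimes_\Z\Q$ the logarithm of $E$ furnishes a natural isomorphism $\tilde{E}_{\mc N}\cong\mc N\otimes_RTE$ of group sheaves, so the two $\delta$-functors agree rationally, and torsion-freeness of $R$ over $\Z$ should let me promote the rational identification of the connecting maps to an integral one. Once exactness is established, Proposition \ref{proposition-exact-representable} produces a commutative formal Lie group representing $\Psi^q$ with tangent space $M^q=R^qf_*TE$, Proposition \ref{proposition-psi-phi} transports this conclusion to $\Phi^q$, and uniqueness together with locality of the pro-representing object glues the affine-local groups into a formal Lie group over $S$.
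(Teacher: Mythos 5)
Your skeleton matches the paper's: reduce to affine $S$, pass from $\Phi^q$ to $\Psi^q$ via Proposition \ref{proposition-psi-phi}, verify Zink's criterion (Proposition \ref{proposition-exact-representable}), linearize on square-zero algebras to get $\Psi^q(\mc{N})\cong \mc{N}\otimes_R H^q(X,TE)$ for $q\geq q_0$, and linearize rationally via the logarithm $E_{\Q}\cong \widehat{TE}_{\Q}$. But the step you yourself flag as ``the main obstacle'' is genuinely missing, and the mechanism you propose for it does not work as stated. You want to promote the rational identification of connecting maps to an integral one using torsion-freeness of $R$, via d\'evissage along $0\to \mc{N}^{n-1}\to \mc{N}\to \mc{N}/\mc{N}^{n-1}\to 0$. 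The problem is that a nilpotent $R$-algebra $\mc{N}$ (and the pieces appearing in your d\'evissage) need not be $\Z$-torsion-free even when $R$ is flat over $\Z$; for such $\mc{N}$ the comparison map $\Psi^q(\mc{N})\to \Psi^q(\mc{N}\otimes_{\Z}\Q)$ has no reason to be injective, so the rational picture detects nothing and the ``promotion'' collapses. Torsion-freeness of $R$ alone does not control the coefficients.

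What the paper uses instead is Lemma \ref{lemma-referee}: if a right exact functor on nilpotent $R$-algebras ($R$ being $\Z$-torsion-free) is exact on nilpotent $R\otimes\Q$-algebras and on square-zero algebras, then it is exact. Its proof supplies exactly the missing ideas: (i) for $\Z$-torsion-free $N$ one proves $\Phi(N)\to\Phi(N\otimes_{\Z}\Q)$ injective by induction on the nilpotency exponent, using the ideal $N':=N^2_{\Q}\cap N$ (not $N^2$ itself) so that the quotient stays torsion-free; (ii) one deduces that $\Phi$ preserves injections out of torsion-free sources; (iii) for a general injection $N'\hookrightarrow N$ one chooses a surjection $T\to N$ with $T$ free as an $R$-module, forms $T'=N'\times_N T$ and $I=\ker(T\to N)$, which are torsion-free because $R$ is, and concludes by a diagram chase using right exactness. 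You would also need to establish right exactness of $\Psi^q$ for $q\geq q_0$ before any of this applies; the paper does so by descending induction on $q$, interleaved with the lemma (exactness of $\Psi^{q+1}$ forces the connecting map $\Psi^q(N_3)\to\Psi^{q+1}(N_1)$ to vanish), starting from $\Psi^q=0$ for $q>\dim X$. Your long-exact-sequence setup makes this induction available to you, but without the lemma (or an equivalent replacement for step (iii)) the proof does not close.
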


\subsection{Proof of the main theorem}

The following lemma on which the proof is based, was shown to us by the referee for International Mathematics Research Notices.

\begin{lemma}\label{lemma-referee}
  Let $R$ be a $\Z$-torsion free ring. Let 
$$\Phi:\text{(nilpotent $R$-algebras)} \xr{} \text{(abelian groups)}$$
be a right exact functor. Assume that the restriction of $\Phi$ to the following two categories is exact:
\begin{enumerate}
\item (nilpotent $R\otimes \Q$-algebras),
\item (nilpotent $R$-algebras $N$ such that $N^2=0$).
\end{enumerate}
Then $\Phi$ is exact.
\begin{proof}
It suffices to show that $\Phi$ preserves injective maps of nilpotent $R$-algebras. We remark that an exact functor preserves injective maps \cite[Theorem~2.16]{Zink}.

\emph{Step 1.} We claim that $\Phi(N)\xr{} \Phi(N\otimes_{\Z} \Q)$ is injective
if $N$ is $\Z$-torsion-free. By assumption the claim holds if $N^2=0$. 
For the general case, set $N_{\Q}:=N\otimes_{\Z} \Q$ and define 
$N':=N^2_{\Q}\cap N$. Note that $N'\subset N$ is an ideal and $N/N'$ is $\Z$-torsion-free. We obtain a diagram of exact sequences
$$
\xymatrix{
&
\Phi(N')\ar[r]\ar[d]
&
\Phi(N)\ar[r]\ar[d]
&
\Phi(N/N'') \ar[r]\ar[d]
&
0
\\
0\ar[r]
&
\Phi(N'\otimes_{\Z}\Q)\ar[r]
&
\Phi(N\otimes_{\Z}\Q)\ar[r]
&
\Phi(N/N''\otimes_{\Z}\Q) \ar[r]
&
0.
}
$$ 
The left and right vertical arrows are injective by induction on the exponent that annihilates the nilpotent algebra. Hence the central vertical arrow is injective.

\emph{Step 2.} If $N'\xr{} N$ is an injective morphism of nilpotent $R$-algebras and $N'$ is $\Z$-torsion-free then
$\Phi(N')\xr{} \Phi(N)$ is injective. Indeed, the first arrow in $$\Phi(N')\xr{} \Phi(N'\otimes_{\Z}\Q)\xr{} \Phi(N\otimes_{\Z} \Q)$$ is injective by using the first step, and the second arrow is injective by assumption. 

\emph{Step 3.} Let $N'\xr{} N$ be any injective morphism of nilpotent $R$-algebras.  There exists a surjective morphism of 
nilpotent $R$-algebras $T\xr{} N$ such that $T$ is a free $R$-module. Let 
$T':=N'\times_{N} T$ be the fiber product, and let $I=\ker(T\xr{} N)$. 
Since $R$ is $\Z$-torsion-free, $T'$ and $I$ are $\Z$-torsion-free.
By using the second step we obtain a morphism of exact sequences
$$
\xymatrix
{
\Phi(I) \ar[r]\ar[d]^{=}
&
\Phi(T')\ar[r]\arir[d]
&
\Phi(N')\ar[r]\ar[d]
&
0
\\
\Phi(I)\ar[r]
&
\Phi(T)\ar[r]
&
\Phi(N)\ar[r]
&
0,
}
$$ 
where the central vertical arrow is injective. This implies that $\Phi(N')\xr{} \Phi(N)$ is injective. 
\end{proof}
\end{lemma}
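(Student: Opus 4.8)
The plan is to reduce everything to the claim that $\Phi$ sends injections of nilpotent $R$-algebras to injections; by \cite[Theorem~2.16]{Zink} a right exact functor with this property is automatically exact. The essential difficulty is that the two exactness hypotheses are visible only after inverting $\Z$ (condition (1)) or only in nilpotency degree two (condition (2)), so the whole game is to bridge these two worlds across arbitrary nilpotency degree in the presence of $\Z$-torsion. My bridge will be rationalization $N\mapsto N_{\Q}:=N\otimes_{\Z}\Q$, and the argument proceeds in three steps of increasing generality.

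First I would show that for every $\Z$-torsion-free nilpotent $R$-algebra $N$ the canonical map $\Phi(N)\xr{}\Phi(N_{\Q})$ is injective. When $N^2=0$ the map $N\inj N_{\Q}$ is an injection of square-zero algebras, so condition (2) (which I first upgrade to ``preserves injections'' via \cite[Theorem~2.16]{Zink}) applies. In general I would induct on the exponent $r$ with $N^r=0$, using the ideal $N':=N\cap N_{\Q}^2$. The point of this choice is that $N'$ is again $\Z$-torsion-free but of strictly smaller exponent (since $(N')^k\subset N_{\Q}^{2k}$), while $N/N'$ is $\Z$-torsion-free and square-zero because $N^2\subset N'$; thus the inductive hypothesis covers the subideal and condition (2) covers the quotient. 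Comparing the right-exact image under $\Phi$ of $0\xr{}N'\xr{}N\xr{}N/N'\xr{}0$ with the (genuinely exact) image of its rationalization, and using injectivity of the two outer vertical maps, a short diagram chase forces $\Phi(N)\xr{}\Phi(N_{\Q})$ to be injective.

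Second, I would deduce that $\Phi(N')\xr{}\Phi(N)$ is injective whenever $N'\inj N$ is an injection and $N'$ alone is $\Z$-torsion-free. Here the composite $\Phi(N')\xr{}\Phi(N'_{\Q})\xr{}\Phi(N_{\Q})$ has an injective first arrow by the previous step and an injective second arrow because $\Q$ is flat over $\Z$ and condition (1) makes $\Phi$ preserve injections of $\Q$-algebras; since this composite factors through $\Phi(N')\xr{}\Phi(N)$, the latter is injective.

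Finally, for an arbitrary injection $N'\inj N$ I would eliminate all torsion hypotheses by a free presentation. Picking a free $R$-module mapping onto $N$ and truncating its tensor algebra produces a nilpotent $R$-algebra $T$ that is free over $R$ (hence $\Z$-torsion-free, as $R$ is) together with a surjection $T\xr{}N$; I set $I:=\ker(T\xr{}N)$ and $T':=N'\times_N T$. Then $T'\inj T$ with $T'$ torsion-free, and $\Phi$ applied to the two compatible short exact sequences $0\xr{}I\xr{}T'\xr{}N'\xr{}0$ and $0\xr{}I\xr{}T\xr{}N\xr{}0$ gives a ladder of right-exact sequences whose left vertical map is the identity on $\Phi(I)$ and whose middle map $\Phi(T')\xr{}\Phi(T)$ is injective by the second step; a final diagram chase then yields injectivity of $\Phi(N')\xr{}\Phi(N)$. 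I expect the first step, and in particular the construction $N'=N\cap N_{\Q}^2$ that simultaneously lowers the nilpotency exponent and preserves torsion-freeness, to be the crux: it is precisely what lets the square-zero and rational hypotheses meet on the boundary terms of the induction.
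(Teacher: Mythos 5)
Your proposal is correct and follows essentially the same route as the paper's own proof: the same reduction (via \cite[Theorem~2.16]{Zink}) to showing that $\Phi$ preserves injections, the same Step~1 induction on the nilpotency exponent using $N':=N\cap N_{\Q}^2$ (with $N/N'$ square-zero and torsion-free), the same rationalization argument for injections out of a torsion-free source, and the same final free presentation $T\twoheadrightarrow N$ with $T'=N'\times_N T$ and a ladder chase. Your added observations (e.g.\ $(N')^k\subset N_{\Q}^{2k}$ forcing the exponent to drop) only make explicit what the paper leaves implicit.
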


\begin{proof}[Proof of Theorem \ref{thm-main}]
In view of the uniqueness of a pro-representing formal group we may assume that $S=\Spec(R)$ is affine. By Proposition \ref{proposition-psi-phi} it suffices to show that $\Psi^q(X/S,E)$
is pro-representable by a commutative formal Lie group (see (\ref{align-def-psi}) for the definition of $\Psi^q(X/S,E)$).  

Suppose $N$ is a nilpotent $R$-algebra such that $N^2=0$. Then
$$
\Psi^q(X/S,E)(N)=H^q(X,TE\otimes_R N),
$$
and our assumptions imply $H^q(X,TE\otimes_R N)=H^q(X,TE)\otimes_R N$ for all 
$q\geq q_0$. In particular, the restriction of $\Psi^q(X/S,E)$ to the category
of nilpotent $R$-algebras with vanishing square is exact if $q\geq q_0$.

Let us denote by $S_{\Q}=S\times_{\Spec(\Z)}\Spec(\Q)$ the base change. For any 
nilpotent $R\otimes_{\Z} \Q$-algebra $N$ we have
$$
\Psi^q(X/S,E)(N)=\Psi^q(X_{\Q}/S_{\Q},E_{\Q})(N)\cong H^q(X_{\Q},TE_{\Q}\otimes_{R\otimes \Q}N),
$$ 
because there is an isomorphism $E_{\Q}\cong \widehat{TE}_{\Q}$ (see \ref{subsection-logarithm}). As above 
we conclude that the restriction of $\Psi^q(X/S,E)$ to the category
of nilpotent $R\otimes \Q$-algebras is exact if $q\geq q_0$.

In view of Lemma \ref{lemma-referee} we need to show that $\Psi^q(X/S,E)$, for $q\geq q_0$, is right exact in order to conclude that $\Psi^q(X/S,E)$ is exact. We will prove this by descending induction on $q$. Indeed, since $X/S$ is flat, every short exact sequence $0\xr{} N_1\xr{} N_2\xr{} N_3\xr{} 0$ of nilpotent $R$-algebras gives rise to a long exact sequence: 
\begin{multline*}
\dots \xr{} \Psi^q(X/S,E)(N_1) \xr{} \Psi^q(X/S,E)(N_2) \xr{} \Psi^q(X/S,E)(N_3)\xr{} \\ \Psi^{q+1}(X/S,E)(N_1)\xr{} \dots,  
\end{multline*}
and we know $\Psi^q(X/S,E)=0$ for $q>\dim X$.

Finally, in order to apply Proposition \ref{proposition-exact-representable}, we have to show that $\Psi^q(X/S,E)$, for $q\geq q_0$, commutes with arbitrary direct sums. Since $\Psi^q(X/S,E)$ is exact it is sufficient to consider the 
restriction to nilpotent $R$-algebras $N$ with $N^2=0$, where the statement is
evident.
\end{proof}

\section{Deformation cohomology of $\mathbb{G}_m$ and Witt vector cohomology}

\subsection{Cartier modules}
Let $R$ be a commutative ring. For a functor
\begin{equation}\label{equation-functor-nil-to-ab}
F: \text{(nilpotent $R$-algebras)} \xr{} \text{(abelian groups)},  
\end{equation}
we set
$$
F(xR[|x|]):=\varprojlim_n F(xR[x]/x^nR[x]).
$$
The assignment $C:F\mapsto F(xR[|x|])$ is functorial. For a morphism $\Phi:F\xr{} G$ of functors 
we denote by $\Phi_{R[|x|]}:F(xR[|x|]) \xr{}  G(xR[|x|])$ the induced morphism.

Define the functor $\Lambda$ by  
\begin{align*}
\Lambda:&\text{(nilpotent $R$-algebras)} \xr{} \text{(abelian groups)},\\
\Lambda(\mc{N})&:=\{1+n_1t+n_2t^2+\dots+n_rt^r\mid r\geq 0, n_i\in \mc{N}\}\subset (R\oplus \mc{N})[t]^{\times}.  
\end{align*}
Obviously, $\Lambda$ is exact.

\begin{thm}\cite[3.5]{Zink}\label{thm-1-CT}
Let $H$ be a left exact functor. The morphism of abelian groups 
$$
\Hom(\Lambda,H)\xr{\cong} H(xR[|x|]), \quad \Phi\mapsto \Phi_{R[|x|]}(1-xt),
$$ 
is an isomorphism, where $1-xt\in \Lambda(xR[|x|])$.
\end{thm}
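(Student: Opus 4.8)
The plan is to construct an explicit inverse to the evaluation map
\[
\alpha\colon \Hom(\Lambda,H)\xr{}H(xR[|x|]),\qquad \Phi\mapsto \Phi_{R[|x|]}(1-xt),
\]
by reducing an arbitrary element of $\Lambda(\mc{N})$ to the universal curve $1-xt$ via linear factorization over a splitting extension. The key structural input is that for a nilpotent $R$-algebra $\mc{N}$ every $u=1+n_1t+\dots+n_rt^r\in\Lambda(\mc{N})$ splits as a product of linear factors $u=\prod_{i=1}^r(1-\xi_i t)$ after passing to the splitting algebra $\tilde{\mc{N}}$ of its monic reciprocal polynomial $s^r+n_1s^{r-1}+\dots+n_r$. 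Here $\tilde{\mc{N}}$ is a finite free extension of $R\oplus\mc{N}$ carrying an $S_r$-action with invariants $R\oplus\mc{N}$, and the roots $\xi_i$ are nilpotent because their elementary symmetric functions are the $\pm n_j\in\mc{N}$. Writing $\chi_i\colon xR[|x|]\xr{}\tilde{\mc{N}}$, $x\mapsto\xi_i$ (which factors through a truncation $xR[x]/x^nR[x]$ since $\xi_i$ is nilpotent), the factorization together with additivity of $\Phi$ forces
\[
\Phi_{\tilde{\mc{N}}}(u)=\sum_{i=1}^r H(\chi_i)\bigl(\Phi_{R[|x|]}(1-xt)\bigr),
\]
so that, over $\tilde{\mc{N}}$, every $\Phi$ is governed by the single class $\xi:=\Phi_{R[|x|]}(1-xt)$.

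Injectivity of $\alpha$ then follows from left exactness. The inclusion $\mc{N}\hookrightarrow\tilde{\mc{N}}$ is an injective map of nilpotent $R$-algebras, hence a monomorphism, and a left exact functor preserves monomorphisms in this category (this is the property invoked for exact functors in \cite[Theorem~2.16]{Zink}). Thus $H(\mc{N})\xr{}H(\tilde{\mc{N}})$ is injective, and the displayed formula shows that $\Phi_{\mc{N}}(u)$ is pinned down by its image there, hence by $\xi$ alone. Therefore two natural transformations agreeing on $1-xt$ coincide.

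For surjectivity I would, given $\xi\in H(xR[|x|])$, define $\Phi_{\mc{N}}(u)$ to be the unique element of $H(\mc{N})$ mapping to the symmetric class $\eta_u:=\sum_i H(\chi_i)(\xi)\in H(\tilde{\mc{N}})$, and then verify the formal properties. The group law of $\Lambda$ corresponds to concatenating root multisets, so $\eta_{uu'}=\eta_u+\eta_{u'}$ over a common splitting algebra and $\Phi$ is additive; functoriality of the splitting construction along algebra maps $\mc{N}\xr{}\mc{M}$ gives naturality; and since $1-xt$ is already linear (one root, $\chi_1=\id$) the trivial factorization yields $\Phi_{R[|x|]}(1-xt)=\xi$, i.e.\ $\alpha(\Phi)=\xi$. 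All of this must be organized compatibly over the truncations $xR[x]/x^nR[x]$ so that the pro-limit defining the $C$-construction is respected; the splitting algebras can be chosen compatibly in $n$.

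The main obstacle is the well-definedness of $\Phi$: the descent statement that the $S_r$-invariant class $\eta_u\in H(\tilde{\mc{N}})$ actually lies in the image of $H(\mc{N})\hookrightarrow H(\tilde{\mc{N}})$, equivalently that $H$ computes invariants, $H(\tilde{\mc{N}})^{S_r}=H(\mc{N})$. The delicate point is that the obvious presentation of $\mc{N}=\tilde{\mc{N}}^{S_r}$ as the common kernel of the maps $\sigma-\id$ uses $R$-linear maps that are not algebra homomorphisms, so plain left exactness does not apply verbatim. I would circumvent this by realizing the invariants through the two genuine algebra homomorphisms $\tilde{\mc{N}}\rightrightarrows\prod_{\sigma\in S_r}\tilde{\mc{N}}$, namely $a\mapsto(\sigma(a))_\sigma$ and $a\mapsto(a)_\sigma$, whose equalizer is $\mc{N}$, and then arguing that left exactness forces $H(\mc{N})$ to be the corresponding equalizer of $H(\tilde{\mc{N}})\rightrightarrows H(\prod_\sigma\tilde{\mc{N}})$, in which the invariance of $\eta_u$ places it. Making rigorous the passage from ``preserves short exact sequences'' to ``preserves this equalizer of algebra maps''---most cleanly by factoring $\mc{N}\hookrightarrow\tilde{\mc{N}}$ into a tower of extensions whose invariant sequences are honest short exact sequences of nilpotent algebras---is the technical heart of the argument.
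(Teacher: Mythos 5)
The paper itself gives no proof of this statement---it is quoted verbatim from \cite[3.5]{Zink}---so your argument has to stand on its own. The injectivity half is essentially sound: the roots $\xi_i$ are indeed nilpotent in the universal splitting algebra, $\tilde{\mc{N}}$ is a nilpotent $R$-algebra containing $\mc{N}$, and a left exact functor does preserve injective maps of nilpotent $R$-algebras (this last point needs a small induction on the nilpotence degree through the square-zero case, since an arbitrary injection $N'\hookrightarrow N$ does not exhibit $N'$ as an ideal of $N$; it is not literally the statement you cite, but it is true).

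The surjectivity half, however, contains a genuine gap, and the mechanism you propose for closing it rests on a false premise. You assert that $\tilde{\mc{N}}$ carries an $S_r$-action with invariants $R\oplus\mc{N}$. This fails after specialization of the coefficients: take $R=\F_2$, $\mc{N}=\F_2\cdot n$ with $n^2=0$, and $u=1+nt^2$. The splitting algebra of $s^2+n$ is $A[\xi]/(\xi^2-n)$ with $\xi_2=-\xi_1=\xi_1$, so the $S_2$-action is trivial and the invariants are all of $\tilde{A}\supsetneq A$; taking invariants does not commute with base change from the generic polynomial. One can repair this particular defect by working universally---let $\mc{M}$ be the free nilpotent algebra on the roots $x_1,\dots,x_r$ and $\mc{E}\subset\mc{M}$ the one generated by the elementary symmetric functions, where the fundamental theorem of symmetric functions does give $\mc{M}^{S_r}=\mc{E}$---and then defining $\Phi$ on arbitrary $\mc{N}$ by naturality from the universal element. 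But even in the universal situation the decisive step remains open: $\mc{E}$ is not an ideal of $\mc{M}$ (already $e_1x_1\notin\mc{E}$), hence is not the kernel of any homomorphism of nilpotent $R$-algebras out of $\mc{M}$, and left exactness---which only controls kernels of algebra homomorphisms---gives no handle on the equalizer of $H(\mc{M})\rightrightarrows H(\prod_{\sigma}\mc{M})$. Your proposed ``tower of extensions whose invariant sequences are honest short exact sequences'' would have to manufacture exactly such kernels, and it is not clear that it exists. This descent of the symmetric class $\eta_u$ to $H(\mc{N})$ is precisely the content of the theorem; as written, your proof defers it rather than performs it, so the inverse map is not actually constructed.
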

In the theorem, $\Hom(\Lambda,H)$ denotes the natural transformations from 
$\Lambda$ to $H$.

\begin{definition}[Cartier ring] \label{definition-Cartierring}
  Let $R$ be a commutative ring. We set 
$$
\mathbb{E}_R:={\rm End}(\Lambda)^{op},
$$
where $(.)^{op}$ denotes the opposite ring; $\mathbb{E}_R$ is called the \emph{Cartier ring}. 
\end{definition}

Theorem \ref{thm-1-CT} implies immediately that $H(xR[|x|])$ comes equipped 
with a natural left $\mathbb{E}_R$-module structure; $H(xR[|x|])$ is called the 
\emph{Cartier module} of $H$.

In view of  
$$
\mathbb{E}^{op}_R=\End(\Lambda)\xr{\cong, \lambda} \Lambda(xR[|x|]),
$$
we obtain the following elements in $\mathbb{E}_R$:
\begin{align}
  V_n:=\lambda^{-1}(1-x^nt), \quad F_n&:=\lambda^{-1}(1-xt^n), &&\text{for $n\geq 1$,}\\
  [c]&:=\lambda^{-1}(1-cxt), \quad &&\text{for $c\in R$.}
\end{align}

\begin{thm}\cite[3.12]{Zink}
  Every element $\xi\in \mathbb{E}_R$ has a unique representation 
$$
\xi=\sum_{n,m\geq 1} V_n[a_{n,m}]F_m,
$$
with $a_{n,m}\in R$, and for every fixed $n$ almost all $a_{n,m}$ vanish; in other words,
$a_{n,m}=0$ for $m\geq m_0(n)$ with $m_0(n)$ depending on $n$. 
\end{thm}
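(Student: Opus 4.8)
The plan is to transport the entire statement across the isomorphism $\lambda$ of Theorem~\ref{thm-1-CT}, which identifies $\mathbb{E}_R^{op}=\End(\Lambda)$ with the curve group $\Lambda(xR[|x|])$ via $\phi\mapsto \phi_{R[|x|]}(1-xt)$. Since $\Lambda$ takes values in abelian groups, $\lambda$ carries the additive group of $\End(\Lambda)$ isomorphically onto the multiplicative group $\Lambda(xR[|x|])$; in particular it turns a sum $\sum_{n,m}V_n[a_{n,m}]F_m$ into the product $\prod_{n,m}\lambda(V_n[a_{n,m}]F_m)$, and $\lambda$ is a bijection. Thus, once one computes $\lambda(V_n[a]F_m)$, the theorem becomes the assertion that every element of $\Lambda(xR[|x|])$ has a unique factorisation of the corresponding shape, with the stated finiteness condition.

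First I would compute $\lambda(V_n[a]F_m)$. The essential tool is naturality: writing $\gamma_0=1-xt$, any element $1-\beta t$ with $\beta$ nilpotent is the image of $\gamma_0$ under the substitution $x\mapsto\beta$, so for every $\phi\in\End(\Lambda)$ one has $\phi(1-\beta t)=\lambda(\phi)|_{x=\beta}$. This yields $[c](1-\beta t)=1-c\beta t$, $V_n(1-\beta t)=1-\beta^n t$, and $F_m(1-\beta t)=1-\beta t^m$. Because $\mathbb{E}_R$ is the \emph{opposite} of $\End(\Lambda)$, the product $V_n[a]F_m$ formed in $\mathbb{E}_R$ is the composite $F_m\circ[a]\circ V_n$ in $\End(\Lambda)$; applying it to $\gamma_0$ in the order $V_n$, then $[a]$, then $F_m$ gives $1-xt\mapsto 1-x^nt\mapsto 1-ax^nt\mapsto 1-ax^nt^m$. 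Hence $\lambda(V_n[a]F_m)=1-ax^nt^m$, linear in $a$; the values $\lambda(V_n)=1-x^nt$, $\lambda(F_m)=1-xt^m$, $\lambda([c])=1-cxt$ and the identity $F_nV_n=n$ serve as consistency checks.

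Next I would establish the unique factorisation in $\Lambda(xR[|x|])$ by an $x$-adic filtration argument. Filter by $G_k=\{w\equiv 1\bmod x^k\}$. For $w\in\Lambda(xR[|x|])$, the inverse-limit condition forces $w$ to be a polynomial in $t$ modulo each power $x^{k+1}$, so the class of $w$ in $G_k/G_{k+1}$ has the form $1-x^k\sum_m a_{k,m}t^m\pmod{x^{k+1}}$ with only finitely many $a_{k,m}\neq 0$, uniquely determined. Since $\prod_m(1-a_{k,m}x^kt^m)\equiv 1-x^k\sum_m a_{k,m}t^m\pmod{x^{k+1}}$ (all cross terms lie in $x^{2k}\subseteq x^{k+1}$ as $k\geq 1$), dividing $w$ by this finite partial product moves it into $G_{k+1}$; iterating over $k$ produces the expansion and simultaneously yields its uniqueness and the finiteness condition that for each $n=k$ only finitely many $m$ occur. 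The product converges $x$-adically because $\bigcap_k G_k=\{1\}$. Finally one checks that $\lambda$ is an isomorphism of complete topological groups, the $V$-adic topology on $\mathbb{E}_R$ matching the $x$-adic topology, so that the finiteness condition is exactly what makes $\sum_{n,m}V_n[a_{n,m}]F_m$ converge in $\mathbb{E}_R$ and equal $\lambda^{-1}(w)$.

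I expect the main obstacle to be the bookkeeping of the previous two steps rather than any single deep estimate. The delicate points are getting the operator calculus right — the actions are substitutions into $\lambda(\phi)$ dictated by naturality, and the opposite-ring convention reverses the order of composition, which is precisely what makes $\lambda(V_n[a]F_m)$ linear in $a$ (a mistaken order, or a sum-over-roots interpretation of $F_m$, would wrongly yield $a^n$ and destroy uniqueness over a general ring $R$) — and matching the ring topology on $\mathbb{E}_R$ with the $x$-adic topology so that the convergence of the infinite sum is exactly the finiteness condition. Granting these, the factorisation itself is a routine filtration induction, and existence, uniqueness and the finiteness statement all follow at once.
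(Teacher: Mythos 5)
The paper offers no proof of this statement---it is quoted verbatim from Zink \cite[3.12]{Zink}---so there is nothing internal to compare against; judged on its own, your argument is correct and is essentially the standard proof from the cited source. You rightly reduce everything through the isomorphism $\lambda\colon\End(\Lambda)\xr{\cong}\Lambda(xR[|x|])$ of Theorem \ref{thm-1-CT}, compute $\lambda(V_n[a]F_m)=1-ax^nt^m$ by naturality with the opposite-ring convention handled correctly (this is indeed the point where a wrong order of composition would produce $a^n$ and ruin uniqueness over general $R$), and obtain existence, uniqueness and the finiteness condition from the $x$-adic filtration on $\Lambda(xR[|x|])$. The only step you defer---that the finiteness condition makes $\sum_{n,m}V_n[a_{n,m}]F_m$ a well-defined endomorphism, i.e.\ that on $\Lambda(\mc{N})$ with $\mc{N}^k=0$ all but finitely many summands act trivially---does require a short argument (factor an element of $\Lambda(\mc{N})$ as a finite product $\prod_j(1-c_jt^j)=\prod_jF_j(1-c_jt)$ and use the commutation rules to push $V_n$ onto the nilpotent variable), and it is exactly there that the condition ``for fixed $n$ almost all $a_{n,m}$ vanish'' is seen to be the right one; spelling this out would complete the proof.
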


\begin{proposition}\cite[3.13]{Zink}
  The following relations hold in $\mathbb{E}_R$:
  \begin{align*}
    &F_1=1=V_1 &&F_nV_n=n &\text{for $n\geq 1$,} \\
    &[c]V_n=V_n[c^n]  &&F_n[c]=[c^n]F_n &\text{for $n\geq 1$, $c\in R$,}\\
    &V_mV_n=V_{n\cdot m} &&F_nF_m=F_{n\cdot m} &\text{for $m,n\geq 1$,}\\
    & &&F_nV_m=V_mF_n &\text{if ${\rm gcd}(m,n)=1$.}
  \end{align*}
For $c_1,c_2\in R$ we have
\begin{align*}
  [c_1][c_2]&=[c_1c_2]\\
  [c_1+c_2]&=[c_1]+[c_2]+\sum_{n=2}^{\infty}V_n[a_n(c_1,c_2)]F_n, 
\end{align*}
with $a_n\in \Z[X_1,X_2]$.
\end{proposition}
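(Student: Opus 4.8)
The plan is to verify every identity directly in $\mathbb{E}_R=\End(\Lambda)^{op}$ by transporting it through the isomorphism $\lambda:\End(\Lambda)\xr{\cong}\Lambda(xR[|x|])$, $\lambda(\Phi)=\Phi(1-xt)$, of Theorem \ref{thm-1-CT}. Since $\lambda$ is bijective, two endomorphisms of $\Lambda$ coincide as soon as they agree on the universal curve $1-xt$, so each relation reduces to a computation with a single curve. Two structural remarks organize everything. First, because the multiplication on $\mathbb{E}_R$ is the \emph{opposite} of composition, the product $\xi\eta$ equals $\eta\circ\xi$ as an endomorphism, whence
\[
\lambda(\xi\eta)=\eta\bigl(\lambda(\xi)\bigr);
\]
so computing $\lambda$ of a product amounts to applying one operator to the curve attached to the other. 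Second, $\lambda$ is an isomorphism of abelian groups from $(\End(\Lambda),+)$ onto $(\Lambda(xR[|x|]),\cdot)$, so addition in $\mathbb{E}_R$ becomes multiplication of curves and $\lambda(n\cdot\id)=(1-xt)^n$.

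Next I would record the explicit action of the three families of operators on a curve $u(t)=1+\sum_i a_i t^i\in\Lambda(\mc{N})$. By naturality the action of an operator on a linear curve $1-at$ is already forced by its value on $1-xt$ (pull back along $x\mapsto a$), and one reads off that $[c]$ is the substitution $u(t)\mapsto u(ct)$, that $F_n$ is the substitution $u(t)\mapsto u(t^n)$, and that $V_n$ is the Frobenius operator, given on a linear curve by $1-at\mapsto 1-a^nt$ and in general by the standard universal integral formulas (equivalently, by raising the formal roots to the $n$-th power). With these descriptions the bulk of the relations is immediate: $\lambda(F_1)=1-xt=\lambda(V_1)$ gives $F_1=V_1=1$; the identities $\lambda(F_nF_m)=F_m(1-xt^n)=1-xt^{nm}$ and $\lambda(V_mV_n)=V_n(1-x^mt)=1-x^{nm}t$ give the two composition laws; and the homothety relations $[c]V_n=V_n[c^n]$, $F_n[c]=[c^n]F_n$, $[c_1][c_2]=[c_1c_2]$ all drop out by comparing the two prescribed substitutions applied to $1-xt$, the only Frobenius inputs being linear curves.

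The main obstacle is the mixed relations $F_nV_n=n$ and $F_nV_m=V_mF_n$ for ${\rm gcd}(m,n)=1$, since here $V_m$ must be evaluated on the \emph{non-linear} curve $1-xt^n$, where the linear-curve formula no longer applies. I would compute this by a splitting device: writing $1-xt^n=\prod_{\zeta^n=1}(1-\zeta\xi t)$ with $\xi^n=x$, applying $V_m$ raises each root to its $m$-th power, and the elementary identity $\prod_{\zeta^n=1}(1-\zeta s)=1-s^n$ collapses the product. When $m,n$ are coprime the map $\zeta\mapsto\zeta^m$ permutes the $n$-th roots of unity, giving $V_m(1-xt^n)=1-x^mt^n=F_n(1-x^mt)=\lambda(V_mF_n)$, hence $F_nV_m=V_mF_n$; taking $m=n$ instead sends every root to $x$ and yields $\lambda(F_nV_n)=V_n(1-xt^n)=(1-xt)^n=\lambda(n\cdot\id)$, hence $F_nV_n=n$. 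The delicate point is that this argument is run over a base ring with no roots of unity and no $n$-th root of $x$; I would make it rigorous by observing that both sides of each identity are given by universal polynomials with $\Z$-coefficients, so that it suffices to verify the identity after base change to a ring where the roots exist, where the computation above applies verbatim.

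Finally, for the additivity defect $[c_1+c_2]=[c_1]+[c_2]+\sum_{n\ge 2}V_n[a_n(c_1,c_2)]F_n$, I would exploit the structure theorem \cite[3.12]{Zink}: the element $[c_1+c_2]-[c_1]-[c_2]\in\mathbb{E}_R$ has a unique expansion $\sum_{n,m}V_n[b_{n,m}]F_m$. Its image under $\lambda$ is the quotient of curves $(1-(c_1+c_2)xt)\,((1-c_1xt)(1-c_2xt))^{-1}$, and reading off its coefficients, most conveniently in logarithmic (ghost) coordinates, identifies the surviving terms as the diagonal contributions $V_n[a_n]F_n$. The universality and integrality of the polynomials $a_n$ then follow by performing this single computation over the universal ring $\Z[X_1,X_2]$ with $c_i=X_i$ and specializing. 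I expect the verification that only diagonal terms occur, together with the bookkeeping of these universal coefficients, to be the most delicate part of this last step.
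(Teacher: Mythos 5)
The paper does not prove this proposition at all: it is quoted verbatim from Zink \cite[3.13]{Zink}, so there is no in-paper argument to compare against. Your proposal is, in substance, a correct reconstruction of the standard verification (essentially the one in Zink's book): you reduce everything to the universal curve $1-xt$ via Theorem \ref{thm-1-CT}, correctly account for the opposite-ring convention through $\lambda(\xi\eta)=\eta(\lambda(\xi))$, identify $[c]$, $F_n$, $V_n$ with the substitution $t\mapsto ct$, the substitution $t\mapsto t^n$, and the ``raise all roots to the $n$-th power'' operator respectively (the last identification being legitimate precisely because that operator is a natural endomorphism given by universal integral polynomials agreeing with $V_n$ on $1-xt$, so Theorem \ref{thm-1-CT} forces equality), and your root-splitting computation of $V_m(1-xt^n)$ with descent from a ring containing $\mu_n$ and an $n$-th root of $x$ is sound, since $\Lambda$ takes injections of nilpotent algebras to injections and $V_m$ is natural and additive for the multiplicative group structure on curves. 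One remark: the step you flag as most delicate is actually the easiest. Since $\lambda(V_n[a]F_m)=1-ax^nt^m$, and since
$$
\lambda\bigl([c_1+c_2]-[c_1]-[c_2]\bigr)=\frac{1-(c_1+c_2)xt}{(1-c_1xt)(1-c_2xt)}
$$
is a power series in the single variable $s=xt$ with vanishing linear term, the classical unique factorization $1+O(s^2)=\prod_{n\ge 2}(1-a_ns^n)$ over $\Z[X_1,X_2]$ immediately produces only diagonal terms $1-a_nx^nt^n=\lambda(V_n[a_n]F_n)$, with $a_n\in\Z[X_1,X_2]$ by solving the triangular system recursively; no appeal to ghost coordinates or to the full structure theorem \cite[3.12]{Zink} is needed.
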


The subset $\{\sum_{n\geq 1} V_n[c_n]F_n\mid \forall n: c_n\in R\}$ of $\mathbb{E}_R$ 
defines a subring $\mathbb{W}(R)$ and is called the ring of \emph{big Witt vectors} of $R$. 

\begin{example}
We have an isomorphism of $\mathbb{E}_{R}$-modules
\begin{align*}
\mathbb{E}_{R}/ \mathbb{E}_{R}\cdot (F_n-1\mid n\in \Z_{\geq 2}) &\xr{} 
\hat{\mathbb{G}}_m(xR[|x|])\\
1\mapsto 1-x, \quad V_n\mapsto 1-x^n,& \quad  [c]\mapsto 1-cx.
\end{align*}
In other words, the Cartier module of $\hat{\mathbb{G}}_{m,R}$ is $\mathbb{E}_{R}$ modulo the (left) ideal 
generated by the elements $F_n-1$ where $n$ runs through all natural numbers. 
Via 
$$
\mathbb{W}(R)\xr{} \mathbb{E}_{R}/ \mathbb{E}_{R}\cdot (F_n-1\mid n\in \Z_{\geq 2}) \xr{} \hat{\mathbb{G}}_m(xR[|x|])
$$
we obtain an isomorphism $\mathbb{W}(R)\cong \hat{\mathbb{G}}_m(xR[|x|])$ of 
$\mathbb{W}(R)$-modules. 

We define 
$$
\mathbb{W}_{[1,n]}(R):=\hat{\mathbb{G}}_m(xR[x]/x^{n+1}R[x])=\{1+\sum_{i=1}^n b_ix^i\mid b_i\in R\}.
$$

For the Cartier module of $\hat{\mathbb{G}}_a$ we have an isomorphism 
\begin{align*}
\mathbb{E}_{R}/ \mathbb{E}_{R}\cdot (F_n\mid n\in \Z_{\geq 2})&\xr{} \hat{\mathbb{G}}_a(xR[|x|])\\
1\mapsto x, \quad V_n\mapsto x^n,& \quad [c]\mapsto cx. 
\end{align*}
More explicitly, with the identification
$
\hat{\mathbb{G}}_a(xR[|x|])=xR[|x|],
$
we get 
\begin{equation}\label{equation-Fn-Vn-Ga}
  V_n(g(x))=g(x^n), \quad F_n(cx^i)=\begin{cases} ncx^{\frac{i}{n}} &\text{if $n$ divides $i$,} \\ 0 &\text{otherwise,} \end{cases}\quad [c](g(x))=g(cx).
\end{equation}
\end{example}

\begin{proposition}\label{proposition-new-Cartier-module-Wittvector-cohomology}
  Let $f:X\xr{} \Spec(R)$ be a flat separated morphism between noetherian schemes.
We have 
$$
\Psi^{q}(X/S,\hat{\mathbb{G}}_m)(xR[|x|])=\varprojlim_{n} H^{q}(X,\mathbb{W}_{[1,n]}\OO_X).
$$
\begin{proof}
We need to show $ \Psi^{q}(X/S,\hat{\mathbb{G}}_m)(xR[x]/x^{n+1}R[x])=H^{q}(X,\mathbb{W}_{[1,n]}\OO_X)$. 

Let $\{U_i\}_{i=1,\dots,n}$ be an affine covering of $X$. We set 
$A_{J}=\Gamma(\bigcap_{k=0}^s U_{\jmath_k}, \OO_X)$ for all $J=(\jmath_0<\dots <\jmath_s)$. Set $E=\hat{\mathbb{G}}_m$; if $\mc{N}$ is a nilpotent $R$-algebra then 
Lemma \ref{lemma-quotients-coherent} implies that 
$$
H^q(X,\tilde{E}_{\mc{N}})=H^q(\mc{C}^*(\{U_i\},\tilde{E}_{\mc{N}})),
$$
where $\mc{C}^*(\{U_i\},\tilde{E}_{\mc{N}})$ is the \v{C}ech complex. Explicitly, we have 
$$
\mc{C}^p(\{U_i\},\tilde{E}_{xR[x]/x^{n+1}R[x]})= 
\bigoplus_{|J|=p+1} \mathbb{W}_{[1,n]}(A_J).
$$ 
Since the cohomology of $\mathbb{W}_{[1,n]}(\OO_X)$ is also computed by using the \v{C}ech complex, we are done.
\end{proof}
\end{proposition}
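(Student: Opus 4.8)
The plan is to reduce everything to the finite-level identity $\Psi^{q}(X/S,\hat{\mathbb{G}}_m)(xR[x]/x^{n+1}R[x]) = H^{q}(X,\mathbb{W}_{[1,n]}\OO_X)$ for each $n$. Once this is established, the statement for $xR[|x|]$ follows by passing to the inverse limit: by the definition of $C$ one has $\Psi^{q}(X/S,\hat{\mathbb{G}}_m)(xR[|x|]) = \varprojlim_n \Psi^{q}(X/S,\hat{\mathbb{G}}_m)(xR[x]/x^{n}R[x])$, and the truncations $xR[x]/x^{n+1}R[x]$ are cofinal and match, compatibly with the transition maps, the truncations $\mathbb{W}_{[1,n]}\OO_X \to \mathbb{W}_{[1,n-1]}\OO_X$ on the right-hand side.

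To prove the finite-level identity I would first identify the sheaf $\tilde{E}_{\mc{N}}$ for $E=\hat{\mathbb{G}}_m$ and $\mc{N}=xR[x]/x^{n+1}R[x]$, so that $R\oplus \mc{N}=R[x]/x^{n+1}$ and $X_{\Spec(R\oplus\mc{N})}\to X$ is a homeomorphism. For an affine open $U_0=\Spec(B_0)\subset X$, the corresponding open of $X_{\Spec(R\oplus\mc{N})}$ has ring $B_0[x]/x^{n+1}$, and since $\hat{\mathbb{G}}_m(\Spec C)=1+{\rm nil}(C)$ I would unwind the two successive kernels defining $\tilde{E}_{\mc{N}}$: the first kernel (toward the reduction over $T_{{\rm red}}$) leaves the units $1+\sum_{i=0}^n c_ix^i$ whose constant term dies modulo ${\rm nil}(R)$, while the second kernel (toward $\tilde{E}_{\Spec(R)}$, obtained by setting $x=0$) forces $c_0=0$ outright; the higher coefficients $c_1,\dots,c_n\in B_0$ remain unconstrained because $x$ is already nilpotent in $B_0[x]/x^{n+1}$. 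This leaves exactly $\{1+\sum_{i=1}^n c_ix^i\mid c_i\in B_0\}=\mathbb{W}_{[1,n]}(B_0)$, naturally in $B_0$, so $\tilde{E}_{\mc{N}}\cong \mathbb{W}_{[1,n]}\OO_X$ as sheaves of abelian groups on $X$. Taking cohomology then gives the finite-level identity directly, since $\Psi^{q}(X/S,E)(\mc{N})=H^{q}(X,\tilde{E}_{\mc{N}})$ by definition.

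If one instead wants the cohomology made explicit — which is where the hypothesis that $f$ is separated enters — I would fix a finite affine cover $\{U_i\}$, use separatedness to ensure each finite intersection $\bigcap_k U_{\jmath_k}=\Spec(A_J)$ is affine, and invoke Lemma \ref{lemma-quotients-coherent}: $\tilde{E}_{\mc{N}}$ carries a finite filtration with coherent graded pieces, hence has no higher cohomology on the affine $\Spec(A_J)$, so the \v{C}ech-to-derived comparison shows $H^{q}(X,\tilde{E}_{\mc{N}})$ is computed by the \v{C}ech complex, whose degree-$p$ term is $\bigoplus_{|J|=p+1}\mathbb{W}_{[1,n]}(A_J)$ by the section computation above. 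The sheaf $\mathbb{W}_{[1,n]}\OO_X$ admits the analogous filtration (graded pieces $\OO_X$, via the coefficient of $x^i$), so its cohomology is computed by the identical \v{C}ech complex and the two agree. The main obstacle is precisely the bookkeeping in the section computation: correctly tracking the two nested kernels in $\tilde{E}_{\mc{N}}$ and checking that over an affine they cut out exactly the truncated polynomials with vanishing constant term; everything else is formal once this local, natural identification is in hand.
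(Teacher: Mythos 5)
Your proposal is correct and follows essentially the same route as the paper: the key step in both is the local identification of the sections of $\tilde{E}_{\mc{N}}$ over an affine $\Spec(B_0)$ with $\mathbb{W}_{[1,n]}(B_0)$ by unwinding the two kernels, which the paper assembles via the \v{C}ech complex and you package (equivalently) as a sheaf isomorphism $\tilde{E}_{\mc{N}}\cong\mathbb{W}_{[1,n]}\OO_X$ on $X$. Your explicit treatment of the inverse-limit step and of where separatedness is used only makes the argument more complete.
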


\begin{proposition}\label{proposition-Cartier-module-Wittvector-cohomology}
  Let $f:X\xr{} S$ be a flat separated morphism between noetherian schemes. 
Suppose that $S=\Spec(R)$ is flat over $\Spec(\Z)$. 
Suppose that $R^qf_*\OO_X$ is locally free and of finite rank for all 
$q\geq q_0$. 
The Cartier module of $\Phi^{q_0}(X/S,\hat{\mathbb{G}}_m)$ is given 
by $H^{q_0}(X,\mathbb{W}\OO_X)$ where $\mathbb{W}\OO_X$ is the sheaf of big Witt vectors.
\begin{proof}
Proposition \ref{proposition-new-Cartier-module-Wittvector-cohomology} implies that 
we have to show $$H^{q_0}(X,\mathbb{W}\OO_X)=\varprojlim_{n} H^{q_0}(X,\mathbb{W}_{[1,n]}\OO_X).$$ This follows from 
$R^1\varprojlim_{n} H^{q_0-1}(X,\mathbb{W}_{[1,n]}\OO_X)=0$.  By using 
the long exact sequence for $\Psi^*(X/S,\hat{\mathbb{G}}_m)$, Theorem \ref{thm-main} implies that $\Psi^{q_0-1}(X/S,\hat{\mathbb{G}}_m)$ is right exact. 
Therefore the transition maps $H^{q_0-1}(X,\mathbb{W}_{[1,n+1]}\OO_X)\xr{} H^{q_0-1}(X,\mathbb{W}_{[1,n]}\OO_X)$ are surjective.
\end{proof}
\end{proposition}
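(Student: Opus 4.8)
The plan is to reduce the statement to the vanishing of a single $R^1\varprojlim$ term and then to deduce that vanishing from the exactness of $\Psi^{q_0}$ furnished by Theorem~\ref{thm-main}. First I would unwind the definition of the Cartier module. Since $T\hat{\mathbb{G}}_m=\OO_X$, the hypotheses are precisely those of Theorem~\ref{thm-main} for $E=\hat{\mathbb{G}}_m$ and $q\geq q_0$, so $\Phi^{q_0}(X/S,\hat{\mathbb{G}}_m)$ is pro-represented by a commutative formal Lie group whose associated functor on nilpotent $R$-algebras is $\Psi^{q_0}(X/S,\hat{\mathbb{G}}_m)$. Hence its Cartier module is $\Psi^{q_0}(X/S,\hat{\mathbb{G}}_m)(xR[|x|])$, which by Proposition~\ref{proposition-new-Cartier-module-Wittvector-cohomology} equals $\varprojlim_n H^{q_0}(X,\mathbb{W}_{[1,n]}\OO_X)$. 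As $\mathbb{W}\OO_X=\varprojlim_n\mathbb{W}_{[1,n]}\OO_X$, it then remains only to prove that cohomology commutes with this inverse limit in degree $q_0$, i.e.
\[
H^{q_0}(X,\mathbb{W}\OO_X)=\varprojlim_n H^{q_0}(X,\mathbb{W}_{[1,n]}\OO_X).
\]

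Next I would invoke the Milnor-type exact sequence for the cohomology of an inverse limit of sheaves. The transition maps $\mathbb{W}_{[1,n+1]}\OO_X\to\mathbb{W}_{[1,n]}\OO_X$ are the evident truncations of power series, hence surjective on sections over every affine open; therefore the derived inverse limit of the sheaf system is concentrated in degree $0$ and equals $\mathbb{W}\OO_X$. Since the index set is $\mathbb{N}$, the functors $R^p\varprojlim$ vanish for $p\geq 2$, and the resulting spectral sequence $R^p\varprojlim_n H^q(X,\mathbb{W}_{[1,n]}\OO_X)\Rightarrow H^{p+q}(X,\mathbb{W}\OO_X)$ degenerates to the short exact sequence
\[
0\to R^1\varprojlim_n H^{q_0-1}(X,\mathbb{W}_{[1,n]}\OO_X)\to H^{q_0}(X,\mathbb{W}\OO_X)\to \varprojlim_n H^{q_0}(X,\mathbb{W}_{[1,n]}\OO_X)\to 0.
\]
Thus the desired identity is equivalent to the vanishing of $R^1\varprojlim_n H^{q_0-1}(X,\mathbb{W}_{[1,n]}\OO_X)$, which I would obtain from the Mittag-Leffler criterion, namely by showing the transition maps $H^{q_0-1}(X,\mathbb{W}_{[1,n+1]}\OO_X)\to H^{q_0-1}(X,\mathbb{W}_{[1,n]}\OO_X)$ are surjective.

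Under the identification of Proposition~\ref{proposition-new-Cartier-module-Wittvector-cohomology}, these transition maps are the images under $\Psi^{q_0-1}(X/S,\hat{\mathbb{G}}_m)$ of the surjection of nilpotent algebras $xR[x]/x^{n+2}R[x]\twoheadrightarrow xR[x]/x^{n+1}R[x]$, so it suffices to know that $\Psi^{q_0-1}(X/S,\hat{\mathbb{G}}_m)$ is right exact. This is the delicate point, since $q_0-1$ lies below the range in which Theorem~\ref{thm-main} asserts exactness; I would extract it from the exactness of $\Psi^{q_0}(X/S,\hat{\mathbb{G}}_m)$, which Theorem~\ref{thm-main} \emph{does} provide, via the long exact sequence of $\Psi^{*}(X/S,\hat{\mathbb{G}}_m)$. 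Indeed, for a short exact sequence $0\to N_1\to N_2\to N_3\to 0$ of nilpotent $R$-algebras the relevant segment reads
\[
\Psi^{q_0-1}(N_2)\to \Psi^{q_0-1}(N_3)\to \Psi^{q_0}(N_1)\to \Psi^{q_0}(N_2),
\]
and since an exact functor preserves injections \cite[Theorem~2.16]{Zink}, the last arrow is injective; hence $\Psi^{q_0-1}(N_3)\to\Psi^{q_0}(N_1)$ is the zero map and $\Psi^{q_0-1}(N_2)\to\Psi^{q_0-1}(N_3)$ is surjective. This gives the required surjectivity and completes the argument. The main obstacle is exactly this last reduction: the interchange of $H^{q_0}$ with the inverse limit is governed entirely by $R^1\varprojlim$ in degree $q_0-1$, and that term is controlled not by exactness in the same degree but by exactness one degree higher, which is precisely what Theorem~\ref{thm-main} supplies.
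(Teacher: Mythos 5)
Your proposal is correct and follows essentially the same route as the paper: reduce via Proposition \ref{proposition-new-Cartier-module-Wittvector-cohomology} to the vanishing of $R^1\varprojlim_n H^{q_0-1}(X,\mathbb{W}_{[1,n]}\OO_X)$, and obtain the required Mittag-Leffler surjectivity from the right exactness of $\Psi^{q_0-1}$, which the long exact sequence extracts from the exactness of $\Psi^{q_0}$ supplied by Theorem \ref{thm-main}. Your write-up merely makes explicit the steps the paper leaves terse (the $\varprojlim$ spectral sequence and the injectivity of $\Psi^{q_0}(N_1)\to\Psi^{q_0}(N_2)$).
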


\section{Gauss-Manin connection} \label{subsection-Gauss-Manin}

\subsection{}
Let $S$ be a smooth scheme  over $\Spec(\Z)$. Let $\mathfrak{X}$ be 
a commutative formal Lie group over $S$. We have the de-Rham complex 
$\Omega^*_{\mathfrak{X}/S}$ at disposal and the Katz-Oda \cite{Katz-Oda} construction yields
an integrable connection 
$$
\nabla:H^i(\Omega^*_{\mathfrak{X}/S})\xr{} 
H^i(\Omega^*_{\mathfrak{X}/S})\otimes_{\OO_S}\Omega^1_{S/\Z}\qquad \text{for all $i$.}
$$ 
Locally, we can write a closed $i$-form $\omega$ as
$$
\omega=\sum_{J=(j_1,\dots,j_i)}\omega_J\cdot dx_{j_1}\wedge \dots \wedge dx_{j_i},
$$ 
with $\omega_J=\sum_{K=(k_1,\dots,k_d)} a_{J,K}\cdot x_1^{k_1}\cdots x_{d}^{k_d}$, and
$a_{J,K}\in \OO_S$. For a section $\xi\in T_{S/\Z}$ the action is simply given by
$$
\nabla_{\xi}(\omega)=\sum_{J=(j_1,\dots,j_i)}\sum_{K=(k_1,\dots,k_d)} \xi(a_{J,K})\cdot x_1^{k_1}\cdots x_{d}^{k_d} \cdot dx_{j_1}\wedge \dots \wedge dx_{j_i}.
$$
In view of Proposition \ref{proposition-invariant-forms} we have a
morphism 
$$
\eta:T\mf{X}^{\vee}\xr{} \Omega^1_{\mf{X}/S},
$$
inducing an isomorphism with the  invariant $1$-forms. By using Lemma \ref{lemma-invariant-implies-closed}
we obtain 
\begin{equation}\label{equation-eta-bar}
  \bar{\eta}:T\mf{X}^{\vee}\xr{} H^1(\Omega^*_{\mf{X}/S}).
\end{equation}
\begin{definition}\label{definition-de-Rham-inv}
  We define 
$
H^1_{{\rm inv}}(\mf{X})\subset H^1(\Omega^*_{\mf{X}/S})
$
to be the smallest $\OO_S$-submodule such that $H^1_{{\rm inv}}(\mf{X})$
contains the image of $\bar{\eta}$, and the connection on $H^1(\Omega^*_{\mf{X}/S})$
induces a connection on $H^1_{{\rm inv}}(\mf{X})$.
\end{definition}

\subsection{}
It is more convenient to work with modules rather than with connections. We denote by $D_S$ the sheaf differential operators    
and we denote by $D'_S$ the subsheaf generated (as $\OO_S$-algebra) by differential 
operators of order $\leq 1$. Locally, we can write $S=\Spec(R)$ and suppose 
that there is an \'etale morphism $\Z[x_1,\dots,x_n]\xr{} R$. As an $R$-module
we have 
$$
D'_S(S)=R\otimes_\Z \Z[\partial_1,\dots,\partial_n]
$$
with $\partial_i(x_j)=\delta_{i,j}$, the ring structure being uniquely determined
by 
$$
\partial_i\partial_j=\partial_j\partial_i, \quad \partial_ir=r\partial_i+\partial_i(r) \quad \text{for all $i,j$ and $r\in R$.}
$$

We have an obvious equivalence of categories between $\OO_S$-modules with 
integrable connection and $D'_S$-modules. By definition, 
$H^1_{{\rm inv}}(\mathfrak{X})$ is the $D'_S$-submodule of $H^1(\Omega^*_{\mf{X}/S})$ 
generated by $\bar{\eta}(T\mf{X}^{\vee})$, thus it comes equipped with a 
surjective morphism 
$$
D'_S\otimes_{\OO_S} T\mf{X}^{\vee} \xr{} H^1_{{\rm inv}}(\mathfrak{X}).
$$
In particular, $H^1_{{\rm inv}}(\mathfrak{X})$ is a quasi-coherent $\OO_S$-module.

\subsection{}
Let $f:X\xr{} S$ be a smooth, projective morphism of relative dimension $r$.
As above we assume that $S$ is smooth over $\Spec(\Z)$. The de-Rham cohomology
$$
H^i_{{\rm dR}}(X/S):=R^if_*(\Omega^*_{X/S})
$$
comes equipped with the Gauss-Manin connection, hence defines a $D'_S$-module.

\begin{thm}\label{thm-GM-simple}
  Let $f:X\xr{} S$ be a smooth, projective morphism of relative dimension $r$.
Suppose that $S$ is smooth over $\Spec(\Z)$ and suppose that 
$R^jf_*\Omega^i_{X/S}$ is locally free for all $i,j$. 
Then $H^1_{{\rm inv}}(\Phi^i(X/S))$ is a subquotient of $H^{2r-i}_{{\rm dR}}(X/S)$
as $D'_S$-module for all $i\geq 0$.
\end{thm}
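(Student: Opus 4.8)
The plan is to realize $\Phi^i(X/S)$ as a commutative formal Lie group by Theorem~\ref{thm-main}, to identify its module of invariant differentials with the top Hodge piece of $H^{2r-i}_{{\rm dR}}(X/S)$ by duality, and to deduce the comparison of $D'_S$-modules from Stienstra's theorem \cite{Stienstra}, the delicate point being the horizontality of that comparison.

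First I would apply Theorem~\ref{thm-main}. As $f$ is smooth projective it is flat, $S$ is smooth hence flat over $\Spec(\Z)$, and the hypothesis on $R^jf_*\Omega^i_{X/S}$ gives in particular (for $i=0$) that $R^if_*\OO_X=R^if_*T\hat{\mathbb{G}}_m$ is locally free of finite rank. Hence $\mf{X}:=\Phi^i(X/S)$ is pro-represented by a commutative formal Lie group over $S$ with $T\mf{X}=R^if_*\OO_X$, and $H^1_{{\rm inv}}(\mf{X})$ of Definition~\ref{definition-de-Rham-inv} is defined. Relative Serre duality for the smooth projective morphism $f$ of relative dimension $r$ then yields a perfect pairing $R^if_*\OO_X\times R^{r-i}f_*\Omega^r_{X/S}\xr{}\OO_S$, so $T\mf{X}^{\vee}\cong R^{r-i}f_*\Omega^r_{X/S}$. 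Since all Hodge sheaves are locally free, the Hodge-to-de Rham spectral sequence degenerates (degeneration propagates from the characteristic-zero fibres where the $E_1$-ranks already add up to the rank of $H^{2r-i}_{{\rm dR}}$), and because $\Omega^j_{X/S}=0$ for $j>r$ this module is exactly the bottom step $F^rH^{2r-i}_{{\rm dR}}(X/S)$ of the Hodge filtration. Thus the source of $\bar\eta$ in (\ref{equation-eta-bar}) is canonically the top Hodge piece of $H^{2r-i}_{{\rm dR}}(X/S)$.

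The core input is Stienstra's theorem \cite{Stienstra}, which I would invoke to compare the de Rham cohomology of the formal group $\mf{X}$ with $H^{2r-i}_{{\rm dR}}(X/S)$, extending the duality identification above on invariant forms. I expect it to take the following shape: there are $\nabla$-stable $\OO_S$-lattices $M'\subset M''\subseteq H^{2r-i}_{{\rm dR}}(X/S)$, built from the Hodge filtration and the connection, and an isomorphism $H^1_{{\rm inv}}(\mf{X})\cong M''/M'$ of $\OO_S$-modules under which $\bar\eta(T\mf{X}^{\vee})$ corresponds to the image of $F^r$. The occurrence of a genuine subquotient rather than a submodule reflects the torsion nature of $H^1_{{\rm inv}}(\mf{X})$: over $\Q$ the formal group becomes additive (see \ref{subsection-logarithm}), the invariant forms become exact, so $H^1_{{\rm inv}}(\mf{X})\otimes\Q=0$ and the nonzero classes, being $\Z$-torsion, can only appear as a subquotient of the torsion-free $H^{2r-i}_{{\rm dR}}(X/S)$.

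It then remains to pass to $D'_S$-modules and to identify the obstacle. By Definition~\ref{definition-de-Rham-inv}, $H^1_{{\rm inv}}(\mf{X})$ is the $D'_S$-submodule of $H^1(\Omega^*_{\mf{X}/S})$ generated by $\bar\eta(T\mf{X}^{\vee})$, a quotient of $D'_S\otimes_{\OO_S}T\mf{X}^{\vee}$; likewise the $D'_S$-submodule of $H^{2r-i}_{{\rm dR}}(X/S)$ generated by $F^r$ is a quotient of $D'_S\otimes_{\OO_S}F^r$. Provided the comparison map is horizontal, these two cyclic presentations agree through $T\mf{X}^{\vee}\cong F^r$, and the $\OO_S$-isomorphism of the previous paragraph promotes to an isomorphism of $D'_S$-modules, exhibiting $H^1_{{\rm inv}}(\Phi^i(X/S))$ as a subquotient of the Gauss-Manin module $H^{2r-i}_{{\rm dR}}(X/S)$. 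The main obstacle is exactly this horizontality: one must check that Stienstra's comparison intertwines the Katz-Oda connection on $H^1(\Omega^*_{\mf{X}/S})$ with the Gauss-Manin connection on $H^{2r-i}_{{\rm dR}}(X/S)$. Since both are instances of the Katz-Oda construction \cite{Katz-Oda}, I would reduce this to the functoriality of that construction along the morphism relating the two relative de Rham complexes, and to arranging the lattices $M'\subset M''$ to be $\nabla$-stable so that the connection descends to the torsion quotient; this compatibility, together with pinning down the integral form of Stienstra's theorem, is where the real work lies.
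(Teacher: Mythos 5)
Your opening moves coincide with the paper's: Theorem \ref{thm-main} pro-represents $\Phi^i(X/S)$ with tangent space $R^if_*\OO_X$, and Grothendieck duality identifies $T\Phi^i(X/S)^{\vee}$ with $R^{r-i}f_*\Omega^r_{X/S}$, the top Hodge piece of $H^{2r-i}_{{\rm dR}}(X/S)$. But from there the paper proves the sharper Theorem \ref{thm-GM-better}: the $\OO_S$-linear map \eqref{equation-eta-bar-tau} extends to a \emph{surjection} of $D'_S$-modules $D'_S\cdot\epsilon(R^{r-i}f_*\Omega^r_{X/S})\xr{} H^1_{{\rm inv}}(\Phi^i(X/S))$, and a quotient of a submodule is the desired subquotient. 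The entire content is the well-definedness of that extension: if $\sum_jP_j\cdot\epsilon(\alpha_j^{\vee})=0$ in de Rham cohomology, then $\sum_jP_j\cdot\bar\omega_j=0$ in $H^1(\Omega^*_{\Phi^i(X/S)/S})$. This is exactly where your proposal has a genuine gap. You invoke an ``integral form of Stienstra's theorem'' producing $\nabla$-stable lattices $M'\subset M''$ with $H^1_{{\rm inv}}\cong M''/M'$, plus ``horizontality of the comparison map''; no such comparison map or lattice statement exists (indeed the paper explicitly does not know whether even the rationalized surjection in Corollary \ref{corollary-GM-even-simpler-isom?} is an isomorphism). Stienstra's actual theorem (Theorem \ref{thm-stienstra}) is a congruence: if $\sum_jP_j\omega_j^{\vee}=0$ then $\sum_jP_jB_{n,j}\equiv 0\bmod n$, where the $B_{n,j}$ are the coordinates of $\pi(F_n\xi)$ for $\xi\in H^m(X,\mathbb{W}\OO_X)$. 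The missing steps — which are the proof — are: (i) rewriting the target relation as the integrality statement $\sum_I\sum_jP_j(g_{j,I})x^I\in R[|x_1,\dots,x_d|]$ for the coefficients of the logarithm, using $\omega_j=d\log^*(x_j)$; (ii) verifying that integrality by restricting along curves $\xi:\Spf(R[|x|])\xr{}\Phi^i(X/S)$ with integral coordinates, identifying $\Phi^i(X/S)(R[|x|])$ with $H^i(X,\mathbb{W}\OO_X)$ (Proposition \ref{proposition-Cartier-module-Wittvector-cohomology}) so that Stienstra's congruence applies; and (iii) Lemma \ref{lemma-check-coordinates} to recover integrality in several variables from the curves. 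You defer all of this to ``where the real work lies,'' so the proposal does not contain the argument.

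A secondary but genuine error: your explanation of why a subquotient (rather than a submodule) appears — that $H^1_{{\rm inv}}(\Phi^i(X/S))\otimes\Q=0$ and the classes are $\Z$-torsion — is false. What vanishes is $H^1_{{\rm inv}}$ of the base-changed group $\Phi^i(X_{\Q}/S_{\Q})$; the paper notes right after Corollary \ref{corollary-GM-even-simpler} that $H^1_{{\rm inv}}(\Phi^i(X/S))\otimes_{\OO_S}\OO_{S_{\Q}}$ is in general nonzero, and Example \ref{example-GM} exhibits a nontrivial hypergeometric class. Already for $\hat{\mathbb{G}}_m$ over $\Z$ the class of $dx/(1-x)$ is non-torsion in $H^1(\Omega^*)$, since $n\sum_{k\geq 1}x^k/k$ never has integral coefficients. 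The subquotient arises simply because the constructed map goes \emph{from} a $D'_S$-submodule of $H^{2r-i}_{{\rm dR}}(X/S)$ \emph{onto} $H^1_{{\rm inv}}$, not because of torsion.
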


In fact, it will be easier to prove the theorem when stated in a 
more precise way (Theorem \ref{thm-GM-better}). Theorem \ref{thm-main}
guarantees the pro-representability of 
$\Phi^i(X/S)=\Phi^i(X/S,\hat{\mathbb{G}}_m)$. 
Let $X_{\Q}$ and $S_{\Q}$ denote the base change to $\Spec(\Q)$.  Obviously, 
we have
$$
H^{2r-i}_{{\rm dR}}(X/S)\otimes_{\OO_S}\OO_{S_{\Q}}=H^i_{{\rm dR}}(X_{\Q}/S_{\Q}).
$$
Hence, Theorem \ref{thm-GM-simple} implies the following corollary.

\begin{corollary}\label{corollary-GM-even-simpler}
Let $f:X\xr{} S$ be a smooth, projective morphism of relative dimension $r$.
Suppose that $S$ is smooth over $\Spec(\Z)$ and suppose that 
$R^jf_*\OO_X$ is locally free for all $j$. Then, for all $i$, the sheaf 
$H^1_{{\rm inv}}(\Phi^i(X/S))\otimes_{\OO_S}\OO_{S_{\Q}}$ is a subquotient of 
$H^{2r-i}_{{\rm dR}}(X_{\Q}/S_{\Q})$ as module over the sheaf of differential operators
of $S_{\Q}$.
\begin{proof}
We know that $R^jf_*\Omega^i_{X_{\Q}/S_{\Q}}$ is locally free by Hodge theory. Thus 
there exists an integer $n\neq 0$ such that the base change of $f$ to $S_{\Z[n^{-1}]}=S\times_{\Spec(\Z)}\Spec(\Z[n^{-1}])$ satisfies the assumptions in Theorem  \ref{thm-GM-simple}. Since 
$$
H^1_{{\rm inv}}(\Phi^i(X_{\Z[n^{-1}]}/S_{\Z[n^{-1}]}))\otimes_{\OO_{S_{\Z[n^{-1}]}}}\OO_{S_{\Q}}=H^1_{{\rm inv}}(\Phi^i(X/S))\otimes_{\OO_S}\OO_{S_{\Q}}, 
$$
the statement follows.
\end{proof}
\end{corollary}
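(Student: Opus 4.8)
The plan is to deduce the corollary from Theorem~\ref{thm-GM-simple} by a spreading-out argument: the stronger hypothesis of that theorem (local freeness of \emph{all} Hodge sheaves $R^jf_*\Omega^i_{X/S}$) is automatic after inverting finitely many primes, and the statement over $S_\Q$ is then obtained by flat base change from such a localization. First I note that the object $H^1_{{\rm inv}}(\Phi^i(X/S))$ is well-defined: since $T\hat{\mathbb{G}}_m=\OO_X$, the hypothesis that $R^jf_*\OO_X$ is locally free is exactly the hypothesis of Theorem~\ref{thm-main} for $E=\hat{\mathbb{G}}_m$, so $\Phi^i(X/S)=\Phi^i(X/S,\hat{\mathbb{G}}_m)$ is pro-representable by a commutative formal Lie group and Definition~\ref{definition-de-Rham-inv} applies to it.

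Next I would spread out the Hodge hypothesis. Over $S_\Q$ the morphism is smooth and projective in characteristic zero, so Hodge theory (degeneration of the Hodge--de Rham spectral sequence, equivalently local constancy of the Hodge numbers) forces each $R^jf_*\Omega^i_{X_\Q/S_\Q}$ to be locally free. The locus in $S$ where the coherent sheaf $R^jf_*\Omega^i_{X/S}$ fails to be locally free is closed and, by the previous sentence, disjoint from the generic fibre $S_\Q$; its image under the finite-type morphism $S\xr{}\Spec(\Z)$ is therefore a constructible subset of $\Spec(\Z)$ avoiding the generic point, hence a finite set of closed points. As only finitely many pairs $(i,j)$ contribute (here $0\le i,j\le r$), I can take $n$ to be the product of all the resulting primes, so that on $S_{\Z[n^{-1}]}$ every $R^jf_*\Omega^i_{X_{\Z[n^{-1}]}/S_{\Z[n^{-1}]}}$ is locally free. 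Moreover $S_{\Z[n^{-1}]}$ is smooth over $\Spec(\Z)$, being the base change of the smooth $S$ along the open immersion $\Spec(\Z[n^{-1}])\xr{}\Spec(\Z)$, so Theorem~\ref{thm-GM-simple} applies to the base change $f_{\Z[n^{-1}]}$.

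Applying Theorem~\ref{thm-GM-simple} over $S_{\Z[n^{-1}]}$ gives that $H^1_{{\rm inv}}(\Phi^i(X_{\Z[n^{-1}]}/S_{\Z[n^{-1}]}))$ is a subquotient of $H^{2r-i}_{{\rm dR}}(X_{\Z[n^{-1}]}/S_{\Z[n^{-1}]})$ as $D'_{S_{\Z[n^{-1}]}}$-module. I then base change along $S_\Q\xr{} S_{\Z[n^{-1}]}$, which is flat since $\Q$ is flat over $\Z[n^{-1}]$. Flatness makes $-\otimes_{\OO_{S_{\Z[n^{-1}]}}}\OO_{S_\Q}$ exact, so it preserves the subquotient relation, and it carries the sheaf of differential operators of $S_{\Z[n^{-1}]}$ to that of $S_\Q$, so the $D$-module structure is preserved. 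By transitivity of base change $-\otimes_{\OO_S}\OO_{S_\Q}$ factors through $-\otimes_{\OO_S}\OO_{S_{\Z[n^{-1}]}}$, which yields the identity $H^1_{{\rm inv}}(\Phi^i(X_{\Z[n^{-1}]}/S_{\Z[n^{-1}]}))\otimes_{\OO_{S_{\Z[n^{-1}]}}}\OO_{S_\Q}=H^1_{{\rm inv}}(\Phi^i(X/S))\otimes_{\OO_S}\OO_{S_\Q}$, together with $H^{2r-i}_{{\rm dR}}(X_{\Z[n^{-1}]}/S_{\Z[n^{-1}]})\otimes_{\OO_{S_{\Z[n^{-1}]}}}\OO_{S_\Q}=H^{2r-i}_{{\rm dR}}(X_\Q/S_\Q)$, giving the asserted subquotient over $S_\Q$.

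I expect the main obstacle to be the base-change compatibility invoked in the last step, namely that both $H^1_{{\rm inv}}(\Phi^i(-))$ and the de Rham cohomology with its Gauss--Manin connection commute with the flat base change $S_\Q\xr{} S_{\Z[n^{-1}]}$. For the de Rham side this rests on the local freeness of the Hodge pieces, which lets the Hodge--de Rham spectral sequence, and hence $H^{2r-i}_{{\rm dR}}$, commute with flat pullback, combined with the functoriality of the Katz--Oda connection. For $H^1_{{\rm inv}}(\Phi^i)$ one must check that the pro-representing formal group of Theorem~\ref{thm-main} is compatible with base change, and that the invariant-forms morphism \eqref{equation-eta-bar} and the minimal-$D'$-submodule formation of Definition~\ref{definition-de-Rham-inv} commute with the flat pullback; since $\Z[n^{-1}]\xr{}\Q$ is flat this is where I would spend the most care.
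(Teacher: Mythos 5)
Your proposal is correct and takes essentially the same route as the paper's proof: use Hodge theory over $S_{\Q}$ to spread out local freeness of the Hodge sheaves to $S_{\Z[n^{-1}]}$ for a suitable $n\neq 0$, apply Theorem \ref{thm-GM-simple} there, and descend to $S_{\Q}$ via the identity $H^1_{{\rm inv}}(\Phi^i(X_{\Z[n^{-1}]}/S_{\Z[n^{-1}]}))\otimes_{\OO_{S_{\Z[n^{-1}]}}}\OO_{S_{\Q}}=H^1_{{\rm inv}}(\Phi^i(X/S))\otimes_{\OO_S}\OO_{S_{\Q}}$. The details you supply (Chevalley constructibility for the spreading out, exactness of the flat base change $S_{\Q}\xr{} S_{\Z[n^{-1}]}$ preserving subquotients and the $D'$-module structure, and the observation that the pro-representing formal group restricts along the open immersion) are precisely what the paper's two-line argument leaves implicit.
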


Of course, $H^1_{{\rm inv}}(\Phi^i(X_{\Q}/S_{\Q}))$ vanishes. But $H^1_{{\rm inv}}(\Phi^i(X/S))\otimes_{\OO_{S}}\OO_{S_{\Q}}$ is in general non-trivial 
(see Examples \ref{example-GM}). 

Since $f$ is of relative dimension $r$, we get a natural morphism
$$
\epsilon:R^{r-i}f_*\Omega^{r}_{X/S}\xr{} H^{2r-i}_{{\rm dR}}(X/S).
$$
Therefore, $D'_S\cdot \epsilon(R^{r-i}f_*\Omega^{r}_{X/S})$ is a $D'_S$-submodule 
of $H^{2r-i}_{{\rm dR}}(X/S)$.
Moreover, Grothendieck duality implies the existence of a 
canonical morphism  
\begin{equation}\label{equation-duality-tau}
\tau:R^{r-i}f_*\Omega^{r}_{X/S}\xr{} (R^if_*\OO_X)^{\vee},  
\end{equation}
coming from the pairing 
\begin{equation}\label{equation-Grothendieck-duality}
R^{r-i}f_*\Omega^{r}_{X/S} \times R^if_*\OO_X \xr{} R^{r}f_*\Omega^{r}_{X/S}\xr{{\rm Tr}} \OO_S.  
\end{equation}
Let us now give the more precise formulation of Theorem \ref{thm-GM-simple}.

\begin{thm} \label{thm-GM-better}
Assumptions as in Theorem \ref{thm-GM-simple}. For all $i$, the morphism 
\begin{equation}\label{equation-eta-bar-tau}
  R^{r-i}f_*\Omega^{r}_{X/S} \xr{\text{\eqref{equation-duality-tau}}} (R^if_*\OO_X)^{\vee}\xr{\text{\eqref{equation-eta-bar}}} H^1_{{\rm inv}}(\Phi^i(X/S))
\end{equation} 
extends to a surjective morphism of $D'_S$-modules
\begin{equation}\label{equation-precise-morph}
 D'_S \cdot \epsilon(R^{r-i}f_*\Omega^{r}_{X/S}) \xr{} H^1_{{\rm inv}}(\Phi^i(X/S)).
\end{equation} 
\begin{proof}
Note that $R^if_*\OO_X=T\Phi^i(X/S)$, so that  \eqref{equation-eta-bar} makes sense.
We may suppose that $S=\Spec(R)$ is affine. Furthermore, we may suppose
$\Phi^i(X/S)={\rm Spf}(R[|x_1,\dots,x_d|])$ as formal schemes; in particular we already 
have a trivialization 
\begin{equation}\label{equation-triv-TPhi}
T\Phi^i(X/S)=\bigoplus_{j=1}^d \OO_Se_j.  
\end{equation}
Denote by $e_1^{\vee},\dots,e_d^{\vee}$ the dual basis, set 
\begin{align*}
 &\alpha^{\vee}_j:=\tau^{-1}(e_j^{\vee})\in \Gamma(R^{r-i}f_*\Omega^{r}_{X/S}), &\\
 &\omega_j:=\eta(e_j^{\vee})\in \Gamma(\Omega^1_{\Phi^i(X/S)/S}),
 &\bar{\omega}_j:=\bar{\eta}(e_j^{\vee})\in \Gamma(H^1(\Omega^*_{\Phi^i(X/S)/S})).
\end{align*}
If $P_1,\dots,P_d\in \Gamma(D'_S)$ satisfy 
$$
\sum_{j=1}^d P_j\cdot \epsilon(\alpha^{\vee}_j)=0 \quad \text{in $\Gamma(H^i_{{\rm dR}}(X/S))$,}
$$ 
then we need to show that 
\begin{equation}\label{equation-thm-GM-need-to-show}
\sum_{j=1}^d P_j\cdot \bar{\omega}_j=0 \quad \text{in $\Gamma(H^1(\Omega^*_{\Phi^i(X/S)/S}))$.}
\end{equation}
Our trivialization (\ref{equation-triv-TPhi}) induces an isomorphism
$$
\log:\Phi^i(X/S)\times_{S}S_{\Q} \xr{\cong} \hat{\mathbb{G}}_{a,S_{\Q}}^d,
$$
where $S_{\Q}$ is the base change to $\Spec(\Q)$ (see \ref{subsection-logarithm}). Moreover, we have $d\log^*(x_j)=\omega_j$ (Section \ref{subsubsection-log-and-invarian-one-forms}). Write 
$
\log^*(x_j)=\sum_{I}g_{j,I}x^I
$
with $g_{j,I}\in R\otimes_{\Z}\Q$.
Then (\ref{equation-thm-GM-need-to-show}) is equivalent 
to 
$$
f:=\sum_I \sum_{j=1}^dP_j(g_{j,I})x^{I}\in R[|x_1,\dots,x_d|]. 
$$
Recall from (\ref{equation-log-Cartier-modules}) that $\log$ induces
$$
\log:\Phi^i(X/S)(R[|x|])\xr{} \hat{\mathbb{G}}_{a}((R\otimes \Q)[|x|])^d=\bigoplus_{j=1}^d x\cdot (R\otimes \Q)[|x|].
$$
Moreover, the following diagram is commutative: 
$$
\xymatrix
{
\Phi^i(X/S)(R[|x|]) \ar[r]^{\log} \ar[d]^{\pi}
&
\hat{\mathbb{G}}_{a}((R\otimes \Q)[|x|])^d \ar[d]^{(xf_1,\dots,xf_d)\mapsto (f_1(0),\dots,f_d(0))}
\\
\Gamma(T\Phi^i(X/S))
&
\bigoplus_{j=1}^{d} Re_i, \ar[l]_{\text{~(\ref{equation-triv-TPhi})}}
}
$$
where $\pi$ is the projection. In view of Proposition \ref{proposition-Cartier-module-Wittvector-cohomology} we have 
$$\Phi^i(X/S)(R[|x|])=H^i(X,\mathbb{W}\OO_X)$$ 
and $\pi$ corresponds to the projection $H^i(X,\mathbb{W}\OO_X)\xr{} H^i(X,\OO_X)$ induced by the map $\mathbb{W}\OO_X\xr{} \OO_X$. 

Let $\xi\in \Phi^i(X/S)(R[|x|])$ and write $\log(\xi)=:(\log(\xi)_1,\dots,\log(\xi)_d)$. For $n\geq 1$ we obtain 
$$
\pi(F_n(\xi))=\sum_{j=1}^{d}(F_n(\log(\xi)_j))(0)\cdot e_j,
$$ 
because $\log$ is a morphism of $\mathbb{E}_R$-modules. Note that
$\pi(F_n(\xi))\in \Gamma(T\Phi^i(X/S))$ implies $(F_n(\log(\xi)_j))(0)\in R$.
By construction,
$
\log(\xi)_j=\xi^*\log^*(x_j)\in (R\otimes \Q)[|x|],
$
and we know that
$$
(F_n(\log(\xi)_j))(0)=n\cdot (\text{$n$-th coefficient of $\xi^*\log^*(x_j)$}).
$$
A theorem of Stienstra \ref{thm-stienstra} implies 
$$
\sum_{j=1}^{d} P_j\cdot (F_n(\log(\xi)_j))(0)\in nR,
$$
hence 
\begin{equation}\label{equation-GM-almost-final-step}
\sum_{j=1}^{d} P_j\cdot (\text{$n$-th coefficient of $\xi^*\log^*(x_j)$})\in R \qquad \text{for all $n$.}
\end{equation}
Now suppose that $\xi^*(x_j)\in \Z[|x|]$ for every $j=1,\dots,d$ (where the 
$x_j$ are now the coordinates from $\Phi^i(X/S)={\rm Spf}(R[|x_1,\dots,x_d|])$).
Then (\ref{equation-GM-almost-final-step}) implies that $\xi^*(f)\in R[|x|]$.
In view of Lemma \ref{lemma-check-coordinates} we finally obtain $f\in R[|x_1,\dots,x_d|]$.
\end{proof}
\end{thm}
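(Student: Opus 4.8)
The plan is to reduce the statement to an explicit integrality question about power series, and then to settle that question using Stienstra's divisibility theorem together with the Cartier-module description of the deformation cohomology. Since the assertion is local on $S$, I would first take $S=\Spec(R)$ affine and trivialize the pro-representing formal group as $\Phi^i(X/S)=\Spf(R[|x_1,\dots,x_d|])$. The identification $T\Phi^i(X/S)=R^if_*\OO_X$, which follows from the explicit description of $\Phi^i$, makes the composite \eqref{equation-eta-bar-tau} meaningful. Fixing a basis $e_1,\dots,e_d$ of $T\Phi^i(X/S)$ with dual basis $e_j^\vee$, I set $\alpha_j^\vee=\tau^{-1}(e_j^\vee)$, $\omega_j=\eta(e_j^\vee)$, and write $\bar\omega_j$ for the class of $\omega_j$ in $H^1(\Omega^*_{\Phi^i(X/S)/S})$. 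The extension statement then amounts to the implication: if $P_1,\dots,P_d\in\Gamma(D'_S)$ satisfy $\sum_j P_j\cdot\epsilon(\alpha_j^\vee)=0$ in de Rham cohomology, then $\sum_j P_j\cdot\bar\omega_j=0$ in $H^1$. Surjectivity onto $H^1_{{\rm inv}}$ is then immediate from the definition of $H^1_{{\rm inv}}$ as the $D'_S$-module generated by the invariant forms.

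Next I would translate the desired vanishing into a statement about the integrality of a single power series. Passing to $\Q$-coefficients, the logarithm gives an isomorphism $\log\colon\Phi^i(X/S)\times_S S_\Q\cong\hat{\mathbb{G}}_{a,S_\Q}^d$ with $d\log^*(x_j)=\omega_j$; writing $\log^*(x_j)=\sum_I g_{j,I}x^I$ with $g_{j,I}\in R\otimes\Q$, the relation $\sum_j P_j\cdot\bar\omega_j=0$ in $H^1$ is equivalent to the assertion that $f:=\sum_I\sum_j P_j(g_{j,I})x^I$ actually lies in $R[|x_1,\dots,x_d|]$. Thus the whole problem becomes: show that the \emph{a priori} $(R\otimes\Q)$-coefficient series $f$ has $R$-integral coefficients.

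The hard part, as I see it, is precisely this integrality, and I would attack it through the Cartier-module picture and Stienstra's theorem. The key inputs are the identification $\Phi^i(X/S)(R[|x|])=H^i(X,\mathbb{W}\OO_X)$ from Proposition \ref{proposition-Cartier-module-Wittvector-cohomology} and the fact that $\log$ is a morphism of $\mathbb{E}_R$-modules, so that for a formal point $\xi$ the $n$-th coefficient of $\xi^*\log^*(x_j)$ equals $\frac{1}{n}(F_n(\log(\xi)_j))(0)$. Stienstra's theorem \ref{thm-stienstra} furnishes the crucial divisibility $\sum_j P_j\cdot(F_n(\log(\xi)_j))(0)\in nR$, which after dividing by $n$ shows that the $n$-th coefficient of $\sum_j P_j\cdot\xi^*\log^*(x_j)=\xi^*(f)$ lies in $R$ for every test point $\xi$ whose coordinates $\xi^*(x_j)$ have integer coefficients. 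The remaining step is to promote this integrality of all such specializations $\xi^*(f)$ to the integrality of $f$ itself; this is exactly the content of Lemma \ref{lemma-check-coordinates}, which allows one to verify that a power series is $R$-integral by testing it against formal points with $\Z$-coefficient coordinate functions. I expect the delicate point to be ensuring that Stienstra's divisibility is applicable in the stated $D'_S$-linear form and that enough test points $\xi$ exist to detect non-integrality, both of which rely on the torsion-freeness hypotheses built into the setup.
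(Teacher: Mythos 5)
Your proposal is correct and follows essentially the same route as the paper's own proof: the same reduction to the integrality of the series $f=\sum_I\sum_j P_j(g_{j,I})x^I$ via the logarithm, the same use of the Cartier-module identification $\Phi^i(X/S)(R[|x|])=H^i(X,\mathbb{W}\OO_X)$ together with Stienstra's divisibility theorem, and the same appeal to Lemma \ref{lemma-check-coordinates} to pass from integrality of the specializations $\xi^*(f)$ to integrality of $f$ itself. No substantive differences to report.
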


\begin{lemma}\label{lemma-check-coordinates}
Let $R$ be flat over $\Z$. Let $f\in (R\otimes_{\Z}\Q)[|x_1,\dots,x_d|]$. Suppose
that $\xi^*(f)\in R[|x|]$ for all $\xi:\Spf(R[|x|])\xr{}\Spf(R[|x_1,\dots,x_d|])$ such that $\xi^*(x_j)\in \Z[|x|]$ for all $j$. Then $f\in R[|x_1,\dots,x_d|]$. 
\begin{proof}
  We prove by induction on $d$. The case $d=1$ is trivial. By subtracting 
an element in $R[|x_1,\dots,x_d|]$ we may assume that
$$
f=x_d^m\cdot \sum_{i=0}^{\infty} f_i(x_0,\dots,x_{d-1})x_d^{i}
$$
and $f_0\not\in R[|x_0,\dots,x_{d-1}|]$. For every $\xi':\Spf(R[|x|])\xr{}\Spf(R[|x_1,\dots,x_{d-1}|])$ such that $\xi'^*(x_j)\in \Z[|x|]$ for all $j$, and every 
integer $n\geq 1$, we define
$\xi$ by $\xi^*(x_j)=\xi'^*(x_j)$ if $j<d$ and $\xi^*(x_d)=x_d^n$. Let $n$
tend to $\infty$ in order to obtain $\xi'^*(f_0)\in R[|x_1,\dots,x_{d-1}|]$.
\end{proof}
\end{lemma}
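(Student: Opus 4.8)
The plan is to argue by induction on the number of variables $d$. The base case $d=1$ is immediate: the substitution $\xi^*(x_1)=x$ has $\xi^*(x_1)\in\Z[|x|]$ and $\xi^*(f)=f$, so the hypothesis directly gives $f\in R[|x|]$. For the inductive step I would expand $f$ in powers of the last variable, $f=\sum_{i\geq 0}f_i\, x_d^i$ with $f_i\in(R\otimes_{\Z}\Q)[|x_1,\dots,x_{d-1}|]$, and show that every $f_i$ lies in $R[|x_1,\dots,x_{d-1}|]$. Because $R$ is flat over $\Z$, the inclusion $R\hookrightarrow R\otimes_{\Z}\Q$ is injective, so this is exactly the assertion $f\in R[|x_1,\dots,x_d|]$.

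Suppose instead that some coefficient fails to be integral, and let $m$ be the least index with $f_m\notin R[|x_1,\dots,x_{d-1}|]$. Subtracting the lower-order terms $\sum_{i<m}f_i\,x_d^i$, which by minimality lies in $R[|x_1,\dots,x_d|]$ and hence alters neither the hypothesis nor the conclusion, and then factoring out $x_d^m$, I may assume $f=x_d^m\sum_{i\geq 0}f_i\,x_d^i$ with $f_0\notin R[|x_1,\dots,x_{d-1}|]$. The heart of the argument is a \emph{decoupling} trick. Given any admissible $\xi':\Spf(R[|x|])\xr{}\Spf(R[|x_1,\dots,x_{d-1}|])$, i.e.\ one with $\xi'^*(x_j)\in\Z[|x|]$ for all $j$, and any integer $n\geq 1$, extend it to $\xi$ in $d$ variables by $\xi^*(x_j)=\xi'^*(x_j)$ for $j<d$ and $\xi^*(x_d)=x^n$. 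Since $x^n\in\Z[|x|]$, the hypothesis applies and gives $\xi^*(f)=x^{nm}\sum_{i\geq 0}\xi'^*(f_i)\,x^{ni}\in R[|x|]$.

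The point is that for $i\geq 1$ the summand $x^{nm}\xi'^*(f_i)x^{ni}$ contributes only to $x$-degrees $\geq nm+n$, so for every $N<n$ the coefficient of $x^{nm+N}$ in $\xi^*(f)$ equals the coefficient of $x^N$ in $\xi'^*(f_0)$. As the former lies in $R$ and $n$ may be taken arbitrarily large, every coefficient of $\xi'^*(f_0)$ lies in $R$; that is, $\xi'^*(f_0)\in R[|x|]$. Since this holds for all admissible $\xi'$, the inductive hypothesis in $d-1$ variables yields $f_0\in R[|x_1,\dots,x_{d-1}|]$, contradicting the choice of $f_0$. Therefore no offending coefficient exists and $f\in R[|x_1,\dots,x_d|]$.

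The step I expect to demand the most care is the decoupling estimate: one must check that substituting $x_d\mapsto x^n$ for large $n$ truly isolates the bottom $x_d$-coefficient $f_0$ degree by degree, with no interference from the higher $f_i$, and that sending $n\to\infty$ recovers all coefficients of $\xi'^*(f_0)$. The remaining ingredients---the reduction to a single bad coefficient and the use of flatness of $R$ over $\Z$ to make integrality of coefficients meaningful---are routine.
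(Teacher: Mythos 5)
Your argument is correct and is essentially the paper's own proof: induction on $d$, reduction to $f=x_d^m\sum_i f_i x_d^i$ with $f_0\notin R[|x_1,\dots,x_{d-1}|]$ by subtracting an integral part, and the substitution $\xi^*(x_d)=x^n$ with $n\to\infty$ to isolate $\xi'^*(f_0)$. You merely spell out the degree-counting ("decoupling") step that the paper leaves implicit.
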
  

As in Corollary \ref{corollary-GM-even-simpler} we obtain the following statement.

\begin{corollary}\label{corollary-GM-even-simpler-isom?}
Let $f:X\xr{} S$ be a smooth, projective morphism of relative dimension $r$.
Suppose that $S$ is smooth over $\Spec(\Z)$ and suppose that 
$R^jf_*\OO_X$ is locally free for all $j$. Then, for all $i$, the sheaf 
$H^1_{{\rm inv}}(\Phi^i(X/S))\otimes_{\OO_S}\OO_{S_{\Q}}$ is a quotient of the 
submodule
$D_{S_{\Q}}\cdot R^{r-i}f_*\Omega^r_{X_{\Q}/S_{\Q}}$ of $H^{2r-i}_{{\rm dR}}(X_{\Q}/S_{\Q})$.
\end{corollary}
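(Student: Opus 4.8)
The plan is to upgrade Corollary~\ref{corollary-GM-even-simpler} by feeding the precise Theorem~\ref{thm-GM-better} (rather than Theorem~\ref{thm-GM-simple}) into the same spreading-out and base-change argument, and then to exploit the fact that over a smooth $\Q$-scheme the algebra $D'$ generated by operators of order $\leq 1$ is already the full ring $D$ of differential operators.

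First I would spread out: since $R^jf_*\Omega^i_{X_\Q/S_\Q}$ is locally free by Hodge theory, there is an integer $n\neq 0$ such that the base change $f_{\Z[n^{-1}]}$ of $f$ to $S_{\Z[n^{-1}]}$ satisfies the hypotheses of Theorem~\ref{thm-GM-better}. That theorem then furnishes, over $S_{\Z[n^{-1}]}$, a surjection of $D'_{S_{\Z[n^{-1}]}}$-modules from the submodule $D'_{S_{\Z[n^{-1}]}}\cdot\epsilon(R^{r-i}f_*\Omega^r)$ of $H^{2r-i}_{{\rm dR}}$ onto $H^1_{{\rm inv}}(\Phi^i(X_{\Z[n^{-1}]}/S_{\Z[n^{-1}]}))$. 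Next I would apply $\otimes_{\OO_{S_{\Z[n^{-1}]}}}\OO_{S_\Q}$ and invoke the base-change identity for $H^1_{{\rm inv}}$ recorded in Corollary~\ref{corollary-GM-even-simpler} to identify the target with $H^1_{{\rm inv}}(\Phi^i(X/S))\otimes_{\OO_S}\OO_{S_\Q}$.

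The essential new input, and the only place where characteristic zero is used, is the identification $D'_{S_\Q}=D_{S_\Q}$: on an \'etale chart $\Z[x_1,\dots,x_n]\xr{}R$ the algebra $D'_{S_\Q}$ is generated by the commuting derivations $\partial_1,\dots,\partial_n$, hence contains all ordinary powers $\partial_i^k$, and since $1/k!\in\Q$ it also contains the divided powers $\tfrac{1}{k!}\partial_i^k$ that generate all of $D_{S_\Q}$. Thus the source submodule becomes $D_{S_\Q}\cdot\epsilon(R^{r-i}f_*\Omega^r_{X_\Q/S_\Q})$, and the base-changed surjection exhibits $H^1_{{\rm inv}}(\Phi^i(X/S))\otimes_{\OO_S}\OO_{S_\Q}$ as a quotient of it. The step I expect to demand the most care is checking that it is the honest submodule of $H^{2r-i}_{{\rm dR}}(X_\Q/S_\Q)$, not merely an abstract quotient module, that is produced by base change; but because $S_\Q\xr{}S_{\Z[n^{-1}]}$ is flat (being the base change of the localization $\Spec\Q\xr{}\Spec\Z[n^{-1}]$), the functor $\otimes_{\OO_{S_{\Z[n^{-1}]}}}\OO_{S_\Q}$ is exact, so the inclusion into de Rham cohomology and the surjectivity are both preserved with no Tor obstruction.
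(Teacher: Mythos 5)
Your proposal is correct and follows essentially the same route as the paper, whose proof of this corollary is literally ``as in Corollary~\ref{corollary-GM-even-simpler}'' but with Theorem~\ref{thm-GM-better} in place of Theorem~\ref{thm-GM-simple}: spread out to $S_{\Z[n^{-1}]}$, base change the surjection \eqref{equation-precise-morph} along the flat map $S_{\Q}\xr{}S_{\Z[n^{-1}]}$, and use that $D'_{S_{\Q}}=D_{S_{\Q}}$ in characteristic zero. Your explicit verification of the last identification via divided powers, and of the flatness needed to preserve the submodule $D'\cdot\epsilon(R^{r-i}f_*\Omega^r)$ inside de Rham cohomology under base change, only makes precise what the paper leaves implicit.
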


\begin{remark}
We don't know examples where the quotient map 
$$D_{S_{\Q}}\cdot R^{r-i}f_*\Omega^r_{X_{\Q}/S_{\Q}}\xr{} H^1_{{\rm inv}}(\Phi^i(X/S))\otimes_{\OO_S}\OO_{S_{\Q}}$$ 
is not an isomorphism.
\end{remark}

\subsection{}
In the following we recall the theorem of Stienstra that is used in the proof 
of Theorem \ref{thm-GM-better}. Stienstra proved it in \cite[Theorem~4.6]{Stienstra} 
by using his definition of the big de Rham-Witt complex. For the convenience
of the reader we will recall the proof and work with the big de Rham-Witt 
complex introduced in \cite{H}. 

\begin{thm}[\text{\cite[Theorem~4.6]{Stienstra}}] \label{thm-stienstra}
  Let $f:X\xr{} S$ be a smooth, projective morphism of relative dimension $r$.
Suppose that $S=Spec(R)$ is smooth over $\Spec(\Z)$ and suppose that 
$R^jf_*\Omega^i_{X/S}$ is free for all $i,j$. Fix an integer $m\geq 0$.
Take a basis $\omega_1,\dots,\omega_h$ of $H^m(X,\OO_X)$. Let $\omega_1^{\vee},\dots,\omega_h^{\vee}$ be the dual basis of $H^{r-m}(X,\Omega^r_{X/S})$. Take 
$\xi\in H^m(X,\mathbb{W}\OO_X)$ and define for every positive integer $n$ the 
elements $B_{n,1},\dots,B_{n,h}\in R$ by 
$$
\pi (F_n\xi)= \sum_{j=1}^h B_{n,j}\omega_j,
$$
where $\pi:H^m(X,\mathbb{W}\OO_X)\xr{} H^m(X,\OO_X)$ is the projection.

Suppose $P_1,\dots,P_h\in \Gamma(D'_S)$ are such that
$$
\sum_{j=1}^{h} P_j\omega_j^{\vee}=0 \qquad \text{in $H^{2r-m}(X,\Omega^*_{X/S})$,}
$$ 
then 
$$
\sum_{j=1}^h P_jB_{n,j}=0 \mod n.
$$
\begin{proof}
  For a truncation set $T$, let $\mathbb{W}_T\Omega_X^*$ be the big de-Rham Witt complex
defined in \cite{H}. We will only need a relative version: set 
$$
\mathbb{W}\Omega^*_{X/S}:=\varprojlim_{T} \mathbb{W}_T\Omega_X^*/(f^{-1}\mathbb{W}_T\Omega_S^1\cdot \mathbb{W}_T\Omega_X^*),
$$
where $T$ runs over all finite truncation sets. We denote by $\tau$ the 
projection $\mathbb{W}\Omega^*_{X/S}\xr{} \Omega^*_{X/S}$, it is a
morphism of differential graded algebras.  For all $n\geq 1$ we have 
the Frobenius $F_n:\mathbb{W}\Omega^*_{X/S}\xr{} \mathbb{W}\Omega^*_{X/S}$ 
satisfying  $dF_n=n\cdot F_nd$. Moreover, there is a natural morphism 
$$
\eta_{X/S}:\mathbb{W}\OO_X \xr{} \mathbb{W}\Omega^*_{X/S},
$$
that identifies $\mathbb{W}\Omega^0_{X/S}$ with $\mathbb{W}\OO_X$, but $\eta_{X/S}$
is in general not a morphism of complexes. 

For all integers $n\geq 1$, we set 
$$
\Z/n\otimes \mathbb{W}\Omega^*_{X/S} := \coker(\mathbb{W}\Omega^*_{X/S}\xr{n\cdot} \mathbb{W}\Omega^*_{X/S}), 
$$
and similarly we define $\Z/n\otimes \Omega^*_{X/S}$; we denote by $\epsilon_n$ the quotient map $\mathbb{W}\Omega^*_{X/S}\xr{} \Z/n\otimes \mathbb{W}\Omega^*_{X/S}$. The rule $dF_n=nF_nd$ implies that we obtain a commutative diagram of morphisms
of complexes
$$
\xymatrix{
\mathbb{W}\OO_X\ar[rr]^{\epsilon_n\circ \eta_{X/S}\circ F_n} \ar[rd]_{\epsilon_n\circ \eta_{X/\Z}\circ F_n}
&
&
\Z/n\otimes \mathbb{W}\Omega^*_{X/S}\\
&
\Z/n\otimes \mathbb{W}\Omega^*_{X/\Z}\ar[ur]
&
}
$$
We set
$$
\zeta_n:=(\tau\circ \epsilon_n\circ \eta_{X/S}\circ F_n)(\xi)\in H^m(X,\Z/n\otimes \Omega^*_{X/S});
$$
note that $H^m(X,\Z/n\otimes \Omega^*_{X/S})=H^m_{{\rm dR}}(X/S)\otimes_{R}R/n$, because 
the Hodge to de Rham spectral sequence degenerates at $E_1$ and hence $H^*_{{\rm dR}}(X/S)$
is a free $R$-module.
Recall that $H^{r-m}(X,\Omega^r_{X/S})\subset H^{2r-m}_{{\rm dR}}(X/S)$, as a next step
we need to prove 
\begin{equation}\label{equation-thm-stienstra-Bn}
  \langle \zeta_n, \omega_j^{\vee}\rangle = B_{n,j} \mod nR,
\end{equation}
via the pairing 
$$
\langle.,.\rangle: H^m_{{\rm dR}}(X/S)\times H^{2r-m}_{{\rm dR}}(X/S) \xr{} H^{2r}_{{\rm dR}}(X/S)=H^r(X,\Omega^r_{X/S})\xr{{\rm Tr}} R.
$$
Indeed, we have 
$$
\langle\zeta,\omega^{\vee}_j \rangle = (q(\zeta),\omega_j^{\vee}) \quad \text{for all $\zeta\in H^m_{{\rm dR}}(X/S)$,}
$$
with $q:H^m_{{\rm dR}}(X/S)\xr{} H^m(X,\OO_X)$ induced by the projection $\Omega^*_{X/S}\xr{} \OO_X$, and $(.,.)$ is the Grothendieck duality pairing (\ref{equation-Grothendieck-duality}). Now the equality $q(\zeta_n)=\pi(F_n\xi)$ modulo $nR$ implies (\ref{equation-thm-stienstra-Bn}). 

Since $\zeta_n$ is the image of the  class 
$(\tau\circ \epsilon_n\circ \eta_{X/\Z}\circ F_n)(\xi) \in  
H^m(X,\Z/n\otimes \Omega^*_{X/\Z})$, it is horizontal for the Gauss-Manin connection. 
Therefore $D(\zeta_n)=0$ for all derivations $D$ of $R$. From the compatibility 
of $\langle.,.\rangle$ with the Gauss-Manin connection  we get 
$$
D(B_{n,j})=\langle D(\zeta_n),\omega^{\vee}_j\rangle + \langle \zeta_n,D(\omega_j^{\vee})\rangle= \langle \zeta_n,D(\omega_j^{\vee})\rangle \mod nR
$$
for all derivations. This implies the theorem.
\end{proof}
\end{thm}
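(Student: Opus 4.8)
The plan is to express each $B_{n,j}$ as the value of a de Rham duality pairing against a class $\zeta_n$ that is \emph{horizontal} for the Gauss-Manin connection, and then to carry the relation $\sum_j P_j\omega_j^\vee=0$ across that pairing. The computations are organized by the relative big de Rham-Witt complex $\mathbb{W}\Omega^*_{X/S}$ of \cite{H}, which comes with a projection $\tau:\mathbb{W}\Omega^*_{X/S}\xr{}\Omega^*_{X/S}$, Frobenius operators $F_n$ satisfying $dF_n=n\,F_nd$, and an inclusion of the degree-zero part $\eta_{X/S}:\mathbb{W}\OO_X\xr{}\mathbb{W}\Omega^*_{X/S}$ that is \emph{not} a morphism of complexes. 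The first thing I would exploit is that the identity $dF_n=n\,F_nd$ makes $\eta_{X/S}\circ F_n$ a chain map after reduction modulo $n$: if $\epsilon_n$ denotes reduction modulo $n$ and $\xi\in\mathbb{W}\OO_X$, then $d(\eta_{X/S}F_n\xi)=n\,F_nd(\eta_{X/S}\xi)$ is killed by $\epsilon_n$, so $\epsilon_n\circ\eta_{X/S}\circ F_n$ sends $\mathbb{W}\OO_X$ into cocycles of $\Z/n\otimes\Omega^*_{X/S}$. I would record the analogous absolute map built from $\eta_{X/\Z}$, together with the commuting triangle relating the absolute and relative reductions modulo $n$.

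Next I would introduce the class
$$
\zeta_n:=(\tau\circ\epsilon_n\circ\eta_{X/S}\circ F_n)(\xi)\in H^m(X,\Z/n\otimes\Omega^*_{X/S})=H^m_{{\rm dR}}(X/S)\otimes_R R/n,
$$
the identification being legitimate because freeness of the $R^jf_*\Omega^i_{X/S}$ forces the Hodge-to-de Rham spectral sequence to degenerate and $H^*_{{\rm dR}}(X/S)$ to be $R$-free. The key numerical input is the congruence $\langle\zeta_n,\omega_j^\vee\rangle\equiv B_{n,j}\pmod{nR}$ for the duality pairing $\langle.,.\rangle$ of the theorem. I would prove it by noting that pairing against the top Hodge piece $\omega_j^\vee\in H^{r-m}(X,\Omega^r_{X/S})$ sees only the lowest Hodge component of its partner, i.e. $\langle\zeta,\omega_j^\vee\rangle=(q(\zeta),\omega_j^\vee)$, where $q:H^m_{{\rm dR}}(X/S)\xr{}H^m(X,\OO_X)$ is the Hodge projection and $(.,.)$ is the Grothendieck duality pairing \eqref{equation-Grothendieck-duality}; the definitions then give $q(\zeta_n)=\pi(F_n\xi)\bmod nR$, whence the congruence.

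The conceptual heart of the argument, and the step I expect to be the main obstacle, is the horizontality of $\zeta_n$ for the Gauss-Manin connection. Here I would invoke the commuting triangle: $\zeta_n$ is the image of the class $(\tau\circ\epsilon_n\circ\eta_{X/\Z}\circ F_n)(\xi)\in H^m(X,\Z/n\otimes\Omega^*_{X/\Z})$ manufactured from the \emph{absolute} de Rham-Witt complex. A class pulled back from the absolute complex is flat for the Katz-Oda connection by construction, so $D(\zeta_n)=0$ for every derivation $D$ of $R$ (modulo $n$). The delicate point is to make this lifting rigorous: since $\eta_{X/S}$ is not a morphism of complexes, one must verify that reduction modulo $n$ is exactly what repairs this defect and places $\zeta_n$ in the horizontal part, and one must track the comparison between the absolute and relative complexes throughout.

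Finally I would assemble the pieces. Compatibility of the pairing with the connection together with $D(\zeta_n)=0$ yields, for each derivation $D$,
$$
D(B_{n,j})\equiv\langle\zeta_n,D(\omega_j^\vee)\rangle\pmod{nR}.
$$
Iterating this identity moves an arbitrary operator $P\in\Gamma(D'_S)$---a polynomial in the derivations $\partial_i$ with coefficients in $R$---entirely onto the second argument of the pairing, since every term in which a derivation lands on $\zeta_n$ is annihilated by horizontality; using $R$-bilinearity for the coefficient functions gives $\sum_j P_jB_{n,j}\equiv\langle\zeta_n,\sum_j P_j\omega_j^\vee\rangle\pmod{nR}$. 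The hypothesis $\sum_j P_j\omega_j^\vee=0$ in $H^{2r-m}_{{\rm dR}}(X/S)$ then forces the right-hand side to vanish, which is the desired congruence $\sum_j P_jB_{n,j}\equiv 0\pmod n$.
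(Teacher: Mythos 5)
Your proposal follows essentially the same route as the paper's proof: the relative big de Rham--Witt complex, the observation that $dF_n=nF_nd$ makes $\epsilon_n\circ\eta_{X/S}\circ F_n$ a chain map, the class $\zeta_n$ and the congruence $\langle\zeta_n,\omega_j^\vee\rangle\equiv B_{n,j}\bmod nR$ via the Hodge projection $q$, horizontality of $\zeta_n$ from the comparison with the absolute complex, and the final iteration over derivations. The only difference is that you spell out the last step (moving an arbitrary $P\in\Gamma(D'_S)$ onto the second argument of the pairing), which the paper leaves implicit.
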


\begin{example}\label{example-GM}
  The logarithm of the formal group $\Phi^{n-r}(X/R)$ attached to a complete
intersection $X\subset \P^n_R$ of codimension $r$ has been computed by 
Stienstra \cite{Stienstra2}.

The classical example for Theorem \ref{thm-GM-simple} is the Legendre family
$X=\{zy^2-x(x-z)(x-\lambda z)=0\}\subset \P^2_R$ of elliptic curves, with $R=
\Z[\lambda][\frac{1}{2\lambda(1-\lambda)}]$. An invariant one-form of $\Phi^1(X/R)$
is given by
$$
\omega=\sum_{\substack{n=0\\ \text{$n$ even}}}^{\infty} \binom{n}{\frac{n}{2}}\sum_{k=0}^{\frac{n}{2}} \binom{\frac{n}{2}}{k}^2 \lambda^k x^ndx
$$
with the coordinate $x$ from \cite{Stienstra2} ($\Phi^1(X/R)$ is one dimensional). As $D'_{R}$-module, $H^1_{{\rm dR}}(X/R)$
is annihilated by 
$$
D:=\lambda(1-\lambda)(\frac{d}{d\lambda})^2+(1-2\lambda)\frac{d}{d\lambda}-\frac{1}{4}
$$ 
\cite{Katz}. Since $\binom{n}{\frac{n}{2}}\equiv \pm 1 \mod n+1$, Theorem \ref{thm-GM-simple} implies that 
$$
D(\sum_{k=0}^{\frac{n}{2}} \binom{\frac{n}{2}}{k}^2 \lambda^k)=0 \mod n+1,
$$
which the reader may prove easily by a direct computation (see \cite{Katz}).
The sequence
$(\sum_{k=0}^{\frac{n}{2}} \binom{\frac{n}{2}}{k}^2 \lambda^k)_{n+1}$
forms an element in 
$$
F\in \varprojlim_{\substack{n+1\\ \text{$n$ even}}} \Z/(n+1)[\lambda]\subset (\prod_{p\neq 2}\Z_p)[|\lambda|]
$$
(the projective system is ordered by division) 
that 
satisfies $D(F)=0$ and $F(0)=1$; thus it is given by the hypergeometric series
$F={}_2F_1(\frac{1}{2},\frac{1}{2};1,\lambda).$ 
Since $F$ suffices to reconstruct the operator $D$, the quotient morphism from 
Corollary \ref{corollary-GM-even-simpler-isom?} is an isomorphism in this case.

The analogous statements can also be proved for the Dwork family $\{\sum_{i=0}^mx_i^{m+1}-(m+1)\lambda x_0\cdots x_m=0\}\subset \P^m_R$ of Calabi-Yau varieties (see \cite{Yu}). 
\end{example}

\appendix

\section{Commutative formal Lie groups}

In this section we recall some basic facts in the theory of commutative 
formal Lie groups, that we need in the main text of the paper.  

\subsection{Invariant $1$-forms}\label{subsection-invariant-1-forms}

\subsubsection{}
Let $S$ be a scheme and $\mathfrak{X}$ be a commutative 
formal Lie group over $S$ (see \ref{subsubsection-formal-Lie-group}). 

\begin{definition}
  A $1$-form $\omega\in \Omega_{\mathfrak{X}/S}$ is called \emph{invariant}
if 
$$
m^*(\omega)={\rm pr}_1^*(\omega)+{\rm pr}_2^*(\omega),
$$
where $m:\mathfrak{X}\times_S \mathfrak{X}\xr{} \mathfrak{X}$ is the 
multiplication and ${\rm pr}_i$ are the projections.
\end{definition}

For example, $dx$ is an invariant $1$-form for $\hat{\mathbb{G}}_a$, and 
$dx/(1-x)$ is an invariant $1$-form for $\hat{\mathbb{G}}_m$ in the coordinates
from \ref{subsubsection-formal-completion}. Obviously, the invariant 
$1$-forms form a $\OO_S$-submodule of $\Omega_{\mathfrak{X}/S}$.

In \ref{subsubsection-formal-Lie-group} we introduced the tangent space
of $\mathfrak{X}$ by 
$$
T\mathfrak{X}(U):=\ker(\mathfrak{X}(\Spec(\OO_U[\epsilon]/\epsilon^2))\xr{} \mathfrak{X}(U)),
$$
where $U\xr{} \Spec(\OO_U[\epsilon]/\epsilon^2)$ is the morphism over $U$ given 
by $\epsilon\mapsto 0$. 
Given $\xi\in T\mathfrak{X}(U)$, we can define a derivation 
$D_{\xi}:\Gamma(U,\OO_{\mathfrak{X}})\xr{} \Gamma(U,\OO_S)$ by 
$$
D_{\xi}(f)\epsilon = \xi^*(f)-f(0).
$$
Here and in the following we consider $\OO_S$ as an $\OO_{\mathfrak{X}}$-module via the zero section $f\mapsto f(0)$. This induces an isomorphism of $\OO_S$-modules
\begin{equation}\label{equation-tangent-space-1-forms}
T\mathfrak{X}\xr{} \mathit{Der}_{\OO_S}(\OO_{\mathfrak{X}},\OO_S)=\mathit{Hom}_{\OO_{\mathfrak{X}}}(\Omega_{\mathfrak{X}/S},\OO_S).  
\end{equation}

\begin{proposition}\label{proposition-invariant-forms}
There is a unique morphism of $\OO_S$-modules 
$$
\eta: T\mathfrak{X}^{\vee} \xr{} \Omega_{\mathfrak{X}/S},
$$  
satisfying the following properties:
\begin{enumerate}
\item The morphism $\eta$ induces an isomorphism between $T\mathfrak{X}^{\vee}$
and the invariant $1$-forms. 
\item The diagram 
$$
\xymatrix
{
T\mathfrak{X}^{\vee} \times T\mathfrak{X}\ar[rr] \ar[d]^{\eta\times id}
&
&
\OO_S \ar[d]^{=}
\\
\Omega_{\mathfrak{X}/S} \times T\mathfrak{X} \ar[rr]^{\text{~(\ref{equation-tangent-space-1-forms})}}
&
&
\OO_S
}
$$
is commutative.
\end{enumerate}
\begin{proof}
  \cite[1.19]{Zink}
\end{proof}
\end{proposition}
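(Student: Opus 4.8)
The plan is to reduce to an explicit coordinate computation with the formal group law and then recognize the resulting construction as coordinate-free. First I would pass to an affine open $\Spec(R)\subset S$ over which $\mathfrak{X}\cong \Spf(R[|x_1,\dots,x_d|])$, with multiplication $m^*(x_i)=F_i(x,y)$, where $x=(x_1,\dots,x_d)$ and $y=(y_1,\dots,y_d)$ are the coordinates on the two factors; recall $F_i(x,0)=x_i$ and $F_i(0,y)=y_i$. Denoting by $e:S\xr{}\mathfrak{X}$ the zero section (so $e^*$ is evaluation at the origin, $f\mapsto f(0)$), the identification (\ref{equation-tangent-space-1-forms}) gives $T\mathfrak{X}=(e^*\Omega_{\mathfrak{X}/S})^{\vee}$, hence $T\mathfrak{X}^{\vee}=e^*\Omega_{\mathfrak{X}/S}$, free on the classes $\overline{dx_j}:=e^*(dx_j)$. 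I would then write a general form as $\omega=\sum_i g_i(x)\,dx_i$ and expand the invariance condition $m^*\omega={\rm pr}_1^*\omega+{\rm pr}_2^*\omega$ in the basis $\{dx_j,dy_j\}$ of $\Omega_{\mathfrak{X}\times_S\mathfrak{X}/S}$. Comparing coefficients of $dy_j$ and specializing to $y=0$ yields the necessary condition $g(x)^T J(x)=g(0)^T$, where $J_{ij}(x):=\partial_{y_j}F_i(x,0)$.

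Since $J(0)=I$, the power series $\det J(x)$ has constant term $1$ and is therefore a unit, so $J(x)\in \mathrm{GL}_d(R[|x|])$ and I may \emph{define}
\begin{equation*}
\eta\Bigl(\sum_j c_j\,\overline{dx_j}\Bigr):=\sum_i g_i(x)\,dx_i,\qquad g(x):=\bigl(J(x)^T\bigr)^{-1}c.
\end{equation*}
Because $J(0)=I$ we have $g(0)=c$, so $e^*\eta(\omega_0)=\omega_0$ for every $\omega_0\in T\mathfrak{X}^{\vee}$.

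The crux is to verify that $\eta(\omega_0)$ is genuinely invariant, i.e.\ that the relation extracted at $y=0$ is in fact sufficient; this is the one step that uses associativity. Differentiating $F_i(F(x,y),z)=F_i(x,F(y,z))$ in $z_j$ and setting $z=0$ produces the cocycle identity $J(F(x,y))=M(x,y)\,J(y)$, where $M_{ij}(x,y):=\partial_{y_j}F_i(x,y)$ is the full $y$-Jacobian. The $dy_j$-coefficient of $m^*\omega$ is $\sum_i g_i(F(x,y))\,M_{ij}(x,y)=g(F(x,y))^T J(F(x,y))\,J(y)^{-1}$, and since $g(w)^T J(w)=c^T$ identically, this collapses to $c^T J(y)^{-1}=g(y)^T$, which is exactly the $dy_j$-coefficient of ${\rm pr}_2^*\omega$. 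The $dx_j$-coefficients follow from the same computation after interchanging the two factors, which is legitimate because $\mathfrak{X}$ is commutative; this is the only place commutativity enters. Hence $\eta(\omega_0)$ is invariant, and I expect this associativity bookkeeping to be the main obstacle, the rest being formal.

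Finally I would assemble the stated properties. Invariance together with $e^*\eta=\id$ shows $\eta$ is injective with image inside the invariant forms; conversely every invariant $\omega$ satisfies $g(x)^T J(x)=g(0)^T$, so $\omega=\eta(e^*\omega)$ lies in the image, yielding the isomorphism onto the invariant $1$-forms of part (1). Part (2) is then immediate: the top pairing is the canonical evaluation of $T\mathfrak{X}^{\vee}=e^*\Omega_{\mathfrak{X}/S}$ against $T\mathfrak{X}$, the bottom map (\ref{equation-tangent-space-1-forms}) is evaluation at $e$, and $e^*\eta=\id$ forces the square to commute. For uniqueness, note that an invariant form is determined by its value at $e$ (via $g(x)^T J(x)=g(0)^T$), so any $\eta$ satisfying (1) and (2)—which pin down $e^*\eta=\id$—must coincide with the one constructed; this coordinate-free characterization simultaneously shows that the locally defined maps agree on overlaps and glue to a single global morphism independent of the chosen coordinates.
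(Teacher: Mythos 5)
Your proof is correct and complete. Note that the paper does not actually prove this proposition: it simply cites \cite[1.19]{Zink}, so there is no in-paper argument to compare against. Your explicit construction --- writing $\omega=\sum_i g_i(x)\,dx_i$, extracting the necessary condition $g(x)^T J(x)=g(0)^T$ with $J_{ij}(x)=\partial_{y_j}F_i(x,0)$, inverting $J$ (possible since $J(0)=I$), and then using the cocycle identity $J(F(x,y))=M(x,y)J(y)$ obtained by differentiating associativity to verify sufficiency --- is the standard proof and is essentially what Zink's reference contains. All the steps check out: the $dy_j$-coefficient computation collapses correctly via $g(w)^TJ(w)=c^T$, the appeal to commutativity for the $dx_j$-coefficients is legitimate (and is indeed the only place it is needed; for noncommutative formal groups one would get only left-invariant forms this way), and the uniqueness/gluing argument via $e^*\eta=\id$ together with the fact that an invariant form is determined by its value along the zero section is exactly right. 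What your write-up buys over the paper's citation is self-containedness; what it costs is that the coordinate bookkeeping must be done by hand, which you have done accurately.
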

If $\mathfrak{X}={\rm Spf}(\OO_S[|x_1,\dots,x_d|])$ as formal scheme over $S$ then 
$$
\Omega_{\mathfrak{X}/S}\xr{} \OO_S^d, \quad \sum_{i=1}^d f_i dx_i\mapsto (f_1(0),\dots,f_d(0)),
$$
induces an isomorphism between the invariant $1$-forms and $\OO_S^d$.

\begin{lemma}\label{lemma-invariant-implies-closed}
  Let $S$ be a scheme that is flat over $\Spec(\Z)$. Let $\mathfrak{X}$ be a 
commutative formal Lie group  over $S$. Then the invariant $1$-forms are closed.
\begin{proof}
  We may suppose $S=\Spec(R)$. Denoting by $S\otimes_{\Z}\Q$  the base change 
to $\Spec(\Q)$,
  we have 
$\Gamma(S,\Omega^2_{\mathfrak{X}/R})\subset 
\Gamma(S\otimes_{\Z}\Q,\Omega^2_{\mathfrak{X}\otimes_{\Z}\Q/R\otimes_{\Z}\Q})$. Since
$\mathfrak{X}\otimes_{\Z}\Q$ is isomorphic to the formal completion of 
a vector bundle (Section \ref{subsection-logarithm}) we are reduced to the 
obvious case $\hat{\mathbb{G}}_a$.
\end{proof}
\end{lemma}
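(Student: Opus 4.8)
The plan is to verify closedness after passing to $\Q$-coefficients, where the logarithm trivializes the group law, and then to descend the result using $\Z$-flatness. First I would reduce to the affine case $S=\Spec(R)$; both invariance of a $1$-form and its closedness are local conditions on $S$, so nothing is lost by this reduction.

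The structural input I would use is that over $\Q$ the logarithm identifies any commutative formal Lie group with the formal completion of its tangent bundle (Section~\ref{subsection-logarithm}), hence locally with a product of copies of $\hat{\mathbb{G}}_a$. For $\hat{\mathbb{G}}_a$ the invariant $1$-forms are exactly the $R$-multiples of $dx$, which are visibly closed. Since the logarithm is an isomorphism of formal \emph{groups}, it carries invariant forms to invariant forms and commutes with the de Rham differential; I therefore obtain that every invariant $1$-form on $\mathfrak{X}\otimes_{\Z}\Q$ is closed.

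To descend to $R$ I would invoke the hypothesis that $S$ is flat over $\Spec(\Z)$: this makes $R$ torsion-free, so the module of relative $2$-forms injects into its rationalization, i.e.\ the natural map $\Gamma(S,\Omega^2_{\mathfrak{X}/R})\to \Gamma(S\otimes_{\Z}\Q,\Omega^2_{\mathfrak{X}\otimes_{\Z}\Q/R\otimes_{\Z}\Q})$ is injective. Given an invariant $1$-form $\omega$ over $R$, its image over $\Q$ is again invariant and hence closed by the previous step, so $d\omega$ maps to $0$; injectivity then forces $d\omega=0$.

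The one point that needs care is the reduction to $\hat{\mathbb{G}}_a$: one must know that the logarithm is genuinely an isomorphism of formal group laws over $\Q$, and that invariance of a form is preserved under such an isomorphism, so that the elementary computation for $\hat{\mathbb{G}}_a$ transports back to $\mathfrak{X}\otimes_{\Z}\Q$. Both are furnished by Section~\ref{subsection-logarithm} together with the definition of invariance. Granting them, the real content of the lemma is the passage from \emph{closed over $\Q$} to \emph{closed over $R$}, which is precisely what $\Z$-flatness secures through the injectivity on $2$-forms.
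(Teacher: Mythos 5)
Your proposal is correct and follows essentially the same route as the paper: reduce to $S=\Spec(R)$ affine, use $\Z$-flatness to embed $\Gamma(\Omega^2_{\mathfrak{X}/R})$ into its rationalization, and invoke the isomorphism of $\mathfrak{X}\otimes_{\Z}\Q$ with the formal completion of a vector bundle to reduce to the obvious case of $\hat{\mathbb{G}}_a$. The only difference is that you spell out more explicitly why invariance is preserved under the logarithm isomorphism, which the paper leaves implicit.
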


\subsection{Logarithms and formal groups in characteristic zero}
\label{subsection-logarithm}

\subsubsection{}
Let $S$ be a scheme. Recall from  Section \ref{subsubsection-formal-Lie-group} 
that we have a functor 
\begin{equation}\label{equation-T}
T:\text{(Comm. formal Lie groups over $S$)}\xr{} 
\text{(loc. free sheaves on $S$ of finite rank)},  
\end{equation} 
that assigns to a group its tangent space (or Lie algebra). 

\begin{proposition}\label{proposition-T-Q}
Suppose that $S$ is a scheme over $\Spec(\Q)$. Then $T$ (\ref{equation-T}) is   
an equivalence of categories. 
\begin{proof}
  \cite[4.7]{Zink}
\end{proof}
\end{proposition}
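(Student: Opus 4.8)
The plan is to prove the equivalence by constructing a \emph{logarithm} that canonically trivializes every commutative formal Lie group over a $\Q$-scheme. Essential surjectivity is the easy half: given a locally free $\OO_S$-module $\mc{E}$ of finite rank, its formal completion $\widehat{\mc{E}}$ — locally $\hat{\mathbb{G}}_a^d$ with the additive group law glued from the vector-bundle structure — is a commutative formal Lie group, and one checks $T\widehat{\mc{E}}\cong \mc{E}$ canonically. Thus every object of the target is, up to isomorphism, in the image of $T$, and the whole content is fully faithfulness.

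For fully faithfulness I would construct a functorial isomorphism $\log_{\mf{X}}:\mf{X}\xr{\cong}\widehat{T\mf{X}}$ of formal Lie groups inducing the identity on tangent spaces. Working locally, write $\mf{X}=\Spf(A[|x_1,\dots,x_d|])$ over a $\Q$-algebra $A$. By Proposition~\ref{proposition-invariant-forms} the invariant $1$-forms are free of rank $d$, with a basis $\omega_1,\dots,\omega_d$ satisfying $\omega_j\equiv dx_j$ modulo the maximal ideal. By Lemma~\ref{lemma-invariant-implies-closed} each $\omega_j$ is closed, and since $A$ is a $\Q$-algebra the formal Poincar\'e lemma permits term-by-term integration (dividing by exponents), yielding unique primitives $\ell_j\in A[|x_1,\dots,x_d|]$ with $d\ell_j=\omega_j$ and $\ell_j(0)=0$; automatically $\ell_j\equiv x_j$ modulo $(x_1,\dots,x_d)^2$. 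The tuple $\log:=(\ell_1,\dots,\ell_d)$ then has invertible Jacobian at the origin, hence is an isomorphism of formal schemes inducing the identity on tangent spaces.

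The next step is to see that $\log$ is a homomorphism and is natural. Invariance enters directly: the identity $m^*\omega_j=\mathrm{pr}_1^*\omega_j+\mathrm{pr}_2^*\omega_j$ gives $d\bigl(\ell_j\circ m-\ell_j\circ\mathrm{pr}_1-\ell_j\circ\mathrm{pr}_2\bigr)=0$, so this difference is a constant lying in the augmentation ideal, hence zero; thus $\log\circ m$ is addition $\circ(\log\times\log)$. Uniqueness of the primitives makes the local logarithms canonical, so they glue to a global $\log_{\mf{X}}$, and the same uniqueness shows that any homomorphism $\phi:\mf{X}\xr{}\mf{Y}$ satisfies $\log_{\mf{Y}}\circ\phi=\widehat{T\phi}\circ\log_{\mf{X}}$. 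Conjugating by these functorial isomorphisms reduces fully faithfulness to the subcategory of formal vector groups $\widehat{\mc{E}}$, where a homomorphism is, locally, an additive power series $f$ with $f(x+y)=f(x)+f(y)$; comparing coefficients forces $\binom{n}{k}a_n=0$ for $n\ge 2$ and $0<k<n$, so in characteristic zero $f(x)=a_1x$ is linear. Hence $\Hom(\widehat{\mc{E}},\widehat{\mc{F}})=\Hom_{\OO_S}(\mc{E},\mc{F})=\Hom(T\widehat{\mc{E}},T\widehat{\mc{F}})$, which combined with essential surjectivity shows $T$ is an equivalence.

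The main obstacle is the construction and naturality of the logarithm. The hypothesis that $S$ lies over $\Spec(\Q)$ is used in two essential and non-negotiable places: integrating the (closed) invariant forms requires dividing by the exponents, and the reduction on vector groups requires that additive power series be linear — both fail in positive characteristic, where $x^p$ is additive and non-integrable. The delicate verification is that $\log$ genuinely respects the group laws, which I would handle via the invariance identity above rather than by any direct power-series manipulation of the group law $m$.
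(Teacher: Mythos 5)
Your proof is correct and follows essentially the route the paper relies on: the paper's ``proof'' is only the citation \cite[4.7]{Zink}, but the logarithm you construct by integrating the closed invariant $1$-forms is exactly the canonical isomorphism $\mathfrak{X}\xr{}\hat{V}(T\mathfrak{X})$ that the paper itself records in \ref{subsubsection-log-and-invarian-one-forms} (as $d^{-1}\circ\eta$), with Proposition \ref{proposition-invariant-forms} and Lemma \ref{lemma-invariant-implies-closed} entering in the same way. The one point to state explicitly is that the primitive $d^{-1}(\eta(s))$ is coordinate-independent because it is characterized by vanishing along the zero section; this is what makes your local logarithms glue and gives the naturality you invoke.
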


For every locally free sheaf $\mathcal{E}$ of finite rank on $S$ we have the associated vector
bundle 
$$
V(\mathcal{E}):=\Spec({\rm Sym}^*\mathcal{E}^{\vee}).
$$
The formal completion $\hat{V}(\mathcal{E})$ yields a commutative formal 
Lie group with tangent space $\mathcal{E}$. If $S$ is a scheme over $\Q$, then
for every commutative formal Lie group $\mathfrak{X}$ there exists a unique
isomorphism of formal groups 
$$
\mathfrak{X}\xr{} \hat{V}(T\mathfrak{X}),
$$ 
such that the induced map on the tangent spaces is the identity; the functor 
$\hat{V}$ is inverse to $T$. For every morphism 
$\phi:T\mathfrak{X}\xr{} \mathcal{E}$ we will denote by 
$$
\log_{\phi}: \mathfrak{X}\xr{} \hat{V}(\mathcal{E}) 
$$
the unique morphism over $\phi$.

In general we identify $\hat{V}(\mathcal{\OO}_S^d)$ with $\hat{\mathbb{G}}_a^d$. 
A basic example is 
$\mathfrak{X}=\hat{\mathbb{G}}_m$, $\mathcal{E}=\OO_S$, and $\phi$ coming 
from the coordinate of $\hat{\mathbb{G}}_m$ in \ref{subsubsection-formal-completion}.
Then $\log_{\phi}:\hat{\mathbb{G}}_m\xr{} \hat{\mathbb{G}}_a$ is given by    
$\log_{\phi}^*(x)=\sum_{k=1}^{\infty} \frac{x^k}{k}$. 

\subsubsection{}\label{subsubsection-log-and-invarian-one-forms}
Again, suppose that $S$ is a scheme over $\Spec(\Q)$. The canonical 
morphism $\mathfrak{X}\xr{} \hat{V}(T\mathfrak{X})$ can be constructed as follows.
By  Proposition \ref{proposition-invariant-forms} we have a canonical 
morphism $\eta:T\mathfrak{X}^{\vee}\xr{} \Omega_{\mathfrak{X}/S}$ that induces an isomorphism
with the invariant forms. Since invariant forms are closed (Lemma \ref{lemma-invariant-implies-closed})
we obtain a morphism of $\OO_S$-modules
$$
d^{-1}\circ \eta: T\mathfrak{X}^{\vee} \xr{} \ker(0^*:\OO_{\mathfrak{X}}\xr{} \OO_S).
$$ 
This morphism induces $\mathfrak{X}\xr{} \hat{V}(T\mathfrak{X})$.

\subsubsection{}
Let $S=\Spec(R)$ be flat over $\Spec(\Z)$. 
Let $\mathfrak{X}$ be a commutative formal Lie group. Suppose that $\mathfrak{X}={\rm Spf}(R[|x_1,\dots,x_d|])$ as formal schemes 
and that the zero is defined by $x_i\mapsto 0$.  
We denote by $\mathfrak{X}_{\Q}$ the base change to 
$S_{\Q}:=S\times_{\Spec(\Z)}\Spec(\Q)$. 

Since $T\mathfrak{X}$ comes equipped with a trivialization, we have a 
canonical map
\begin{equation}\label{equation-log-Cartier-modules}
  \mathfrak{X}(R[|x|])\xr{} \mathfrak{X}_{\Q}((R\otimes \Q)[|x|])\xr{\log} 
\hat{\mathbb{G}}_a^d((R\otimes \Q)[|x|])=\bigoplus_{i=1}^d x\cdot (R\otimes \Q)[|x|].
\end{equation}

 

 

\end{document}